\newtheorem{definition}{Definition}[section]
\newtheorem{remark}{Remark}[section]
\newtheorem{conjecture}{Conjecture}[section]
\newtheorem{theorem}{Theorem}[section]
\newtheorem{lemma}{Lemma}[section]
\newtheorem{corollary}{Corollary}[section]
\numberwithin{equation}{section}
\numberwithin{equation}{section}
\begin{document}
\title[CR Reilly Formula and Yau Eigenvalue Conjecture ]{On the CR Analogue
of Reilly Formula and Yau Eigenvalue Conjecture}
\author{$^{\ast }$Shu-Cheng Chang$^{1}$}
\address{$^{1}$Department of Mathematics and Taida Institute for
Mathematical Sciences (TIMS), National \ Taiwan University, Taipei 10617,
Taiwan.}
\email{scchang@math.ntu.edu.tw }
\author{$^{\ast }$Chih-Wei Chen$^{2}$}
\address{$^{2}$Department of Mathematics, National Taiwan University, Taipei
10617, Taiwan, R.O.C. }
\email{BabbageTW@gmail.com}
\author{$^{\ast }$Chin-Tung Wu$^{3}$}
\address{$^{3}$Department of applied Mathematics, National Pingtung
University, Pingtung 90003, Taiwan R.O.C.}
\email{ctwu@mail.nptu.edu.tw}
\thanks{$^{\ast }$Research supported in part by the MOST of Taiwan}

\begin{abstract}
In this paper, we derive the CR Reilly's formula and its applications to
studying of the first eigenvalue estimate for CR Dirichlet eigenvalue
problem and embedded $\mathrm{p}$-minimal hypersurfaces. In particular, we
obtain the first Dirichlet eigenvalue estimate in a compact pseudohermitian $%
(2n+1)$-manifold with boundary and the first eigenvalue estimate of the
tangential sublaplacian on closed oriented embedded $\mathrm{p}$-minimal
hypersurfaces in a closed pseudohermitian $(2n+1)$-manifold of vanishing
torsion.
\end{abstract}

\subjclass{Primary 32V05, 32V20; Secondary 53C56.}
\keywords{Pseudohermitian minimal surface, CR Dirichlet eigenvalue, CR
Reilly formula, Tangential sublaplacian, CR Yau eigenvalue conjecture.}
\maketitle

\section{Introduction}

In the paper of \cite{Re}, by integral version of Bochner-type formula, R.
Reilly proved so-called Reilly formula which has numerous applications. For
example, Reilly himself applied it to prove a Lichnerowicz type sharp lower
bound for the first eigenvalue of Laplacian on compact Riemannian manifolds
with boundary. In this paper, we will derive the CR version of Reilly's
formula and give some applications as well.

Let $(M,J,\theta )$ be a pseudohermitian $(2n+1)$-manifold (see next section
for basic notions in pseudohermitian geometry). The CR Reilly's formula (\ref%
{CR Reilly's formula}) is involved terms which has no analogue in the
Riemannian case. However, one can relate these extra terms to a third-order
operator $P$ which characterizes CR-pluriharmonic functions (\cite{l1}) and
the fourth-order CR Paneitz operator $P_{0}$ (\cite{gl}).

\begin{definition}
(\cite{gl}, \cite{p}) Let $(M,J,\theta )$ be a pseudohermitian $(2n+1)$%
-manifold. We define 
\begin{equation*}
\begin{array}{c}
P\varphi =\sum_{\gamma ,\beta =1}^{n}(\varphi _{\overline{\gamma }\; \  \beta
}^{\, \overline{\text{ }\gamma }}+inA_{\beta \gamma }\varphi ^{\gamma
})\theta ^{\beta }=\sum_{\beta =1}^{n}(P_{\beta }\varphi )\theta ^{\beta },%
\end{array}%
\end{equation*}%
which is an operator that characterizes CR-pluriharmonic functions. Here 
\begin{equation*}
\begin{array}{c}
P_{\beta }\varphi =\sum_{\gamma =1}^{n}(\varphi _{\overline{\gamma }\; \
\beta }^{\, \text{\ }\overline{\gamma }}+inA_{\beta \gamma }\varphi ^{\gamma
}),\text{ \ }\beta =1,\cdots ,n,%
\end{array}%
\end{equation*}%
and $\overline{P}\varphi =\sum_{\beta =1}^{n}\overline{P}_{\beta }\theta ^{%
\overline{\beta }}$, the conjugate of $P$. The CR Paneitz operator $P_{0}$
is defined by 
\begin{equation}
P_{0}\varphi =4\delta _{b}(P\varphi )+4\overline{\delta }_{b}(\overline{P}%
\varphi ),  \label{Paneitz}
\end{equation}%
where $\delta _{b}$ is the divergence operator that takes $(1,0)$-forms to
functions by $\delta _{b}(\sigma _{\beta }\theta ^{\beta })=\sigma _{\beta
}^{\; \  \beta }$, and similarly, $\overline{\delta }_{b}(\sigma _{\overline{%
\beta }}\theta ^{\overline{\beta }})=\sigma _{\overline{\beta }}^{\; \ 
\overline{\beta }}$.
\end{definition}

We observe that (\cite{gl}) 
\begin{equation}
\begin{array}{lll}
P_{0}\varphi & = & 2\square _{b}\overline{\square }_{b}\varphi -4in(A^{\beta
\gamma }\varphi _{\beta }),_{\gamma } \\ 
& = & 2\overline{\square }_{b}\square _{b}\varphi +4in(A^{\overline{\beta }%
\overline{\gamma }}\varphi _{\overline{\beta }}),_{\overline{\gamma }} \\ 
& = & 2(\Delta _{b}^{2}+n^{2}T^{2})\varphi -4n\func{Re}(iA^{\beta \gamma
}\varphi _{\beta }),_{\gamma }%
\end{array}
\label{111a}
\end{equation}%
for $\square _{b}\varphi =(\overline{\partial }_{b}\overline{\partial ^{\ast
}}_{b}+\overline{\partial ^{\ast }}_{b}\overline{\partial }_{b})\varphi
=(-\Delta _{b}+inT)\varphi =-2\varphi _{\overline{\beta }}^{\, \  \overline{%
\beta }}$.

By using integrating by parts to the CR Bochner formula (\ref{Bochnerformula}%
), we derive the following CR version of Reilly's formula.

\begin{theorem}
\label{Reilly'sformula} Let $(M,J,\theta )$ be a compact pseudohermitian $%
(2n+1)$-manifold with boundary $\Sigma $. Then for any real smooth function $%
\varphi $, we have%
\begin{equation}
\begin{array}{ll}
& \frac{n+1}{n}\int_{M}[(\Delta _{b}\varphi )^{2}-\frac{2n}{n+1}\sum_{\beta
,\gamma }|\varphi _{\beta \gamma }|^{2}]d\mu \\ 
= & \frac{n+2}{4n}\int_{M}\varphi P_{0}\varphi d\mu
+\int_{M}[2Ric-(n+1)Tor]((\nabla _{b}\varphi )_{\mathbb{C}},(\nabla
_{b}\varphi )_{\mathbb{C}})d\mu \\ 
& -\frac{n+2}{2n}iC_{n}\int_{\Sigma }\varphi \left( P_{n}\varphi -P_{%
\overline{n}}\varphi \right) d\Sigma _{p}+\frac{i}{2}C_{n}\int_{\Sigma
}(\varphi ^{\overline{\beta }}B_{n\overline{\beta }}\varphi -\varphi ^{\beta
}B_{\overline{n}\beta }\varphi )d\Sigma _{p} \\ 
& -\frac{1}{4}C_{n}\int_{\Sigma }\varphi _{0}\varphi _{e_{n}}d\Sigma _{p}+%
\frac{3}{4n}C_{n}\int_{\Sigma }\varphi _{e_{2n}}\Delta _{b}\varphi d\Sigma
_{p}+C_{n}\int_{\Sigma }\varphi _{e_{2n}}\Delta _{b}^{t}\varphi d\Sigma _{p}
\\ 
& +\frac{1}{4}C_{n}\int_{\Sigma }H_{p.h}\varphi _{e_{2n}}^{2}d\Sigma _{p}+%
\frac{1}{2}C_{n}\int_{\Sigma }\alpha \varphi _{e_{n}}\varphi
_{e_{2n}}d\Sigma _{p} \\ 
& +\frac{1}{4}C_{n}\int_{\Sigma }\tsum_{j,k=1}^{2n-1}\left \langle \nabla
_{e_{j}}e_{2n},e_{k}\right \rangle \varphi _{e_{j}}\varphi _{e_{k}}d\Sigma
_{p}.%
\end{array}
\label{CR Reilly's formula}
\end{equation}%
Here $P_{0}$ is the CR Paneitz operator on $M.\ C_{n}:=2^{n}n!;$ $B_{\beta 
\overline{\gamma }}\varphi :=\varphi _{\beta \overline{\gamma }}-\frac{1}{n}%
\varphi _{\sigma }{}^{\sigma }h_{\beta \overline{\gamma }}$. $\Delta
_{b}^{t}:=\frac{1}{2}\tsum_{j=1}^{2n-1}[e_{j}{}^{2}-(\nabla
_{e_{j}}e_{j})^{t}]$ is the tangential sublaplacian of $\Sigma $ and $%
H_{p.h} $ is the $p$-mean curvature of $\Sigma $ with respect to the
Legendrian normal $e_{2n},$ $\alpha e_{2n}+T\in T\Sigma $ for some function $%
\alpha $ on $\Sigma \backslash S_{\Sigma },$ the singular set $S_{\Sigma }$
consists of those points where the contact bundle $\xi =\ker \theta $
coincides with the tangent bundle $T\Sigma $ of $\Sigma .$ $(\nabla
_{b}\varphi )_{\mathbb{C}}=\varphi ^{\beta }Z_{\beta }$ is the corresponding
complex $(1,0)$-vector field of $\nabla _{b}\varphi $ and $d\Sigma
_{p}=\theta \wedge e^{1}\wedge e^{n+1}\wedge \cdots \wedge e^{n-1}\wedge
e^{2n-1}\wedge e^{n}$ is the $p$-area element on $\Sigma .$
\end{theorem}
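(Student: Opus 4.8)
The plan is to integrate the CR Bochner formula (\ref{Bochnerformula}) over $M$ and to convert every interior divergence that appears into a boundary integral over $\Sigma$ by the pseudohermitian divergence theorem, mirroring Reilly's original Riemannian derivation while carefully tracking the extra terms forced by the anisotropy of the contact structure. Integrating the left-hand side $\tfrac{1}{2}\Delta_b|\nabla_b\varphi|^2$ of the Bochner identity over $M$ produces, by integration by parts against the volume form $d\mu=\theta\wedge(d\theta)^n$, a single boundary integral over $\Sigma$ carrying the normal derivative of $|\nabla_b\varphi|^2$; this is the source term from which the geometric boundary integrals will be extracted. The interior of the Bochner formula contributes the horizontal Hessian norm, the pairing $\langle\nabla_b\varphi,\nabla_b\Delta_b\varphi\rangle$, the curvature-torsion term $[2\mathrm{Ric}-(n+1)\mathrm{Tor}]$, and extra lower-order pieces involving the Reeb derivative $\varphi_0=T\varphi$ that have no Riemannian counterpart.

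Next I would reduce the interior terms to the form displayed in (\ref{CR Reilly's formula}). The horizontal Hessian splits into its holomorphic part $\sum_{\beta,\gamma}|\varphi_{\beta\gamma}|^2$, a trace part governed by $(\Delta_b\varphi)^2$, the traceless mixed part measured by $B_{\beta\overline{\gamma}}\varphi=\varphi_{\beta\overline{\gamma}}-\tfrac1n\varphi_\sigma{}^\sigma h_{\beta\overline{\gamma}}$, and an antisymmetric piece producing the $\varphi_0$ contributions; this decomposition is precisely where the prefactor $\tfrac{n+1}{n}$ and the ratio $\tfrac{2n}{n+1}$ on the left of (\ref{CR Reilly's formula}) originate. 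Integrating $\langle\nabla_b\varphi,\nabla_b\Delta_b\varphi\rangle$ by parts returns $(\Delta_b\varphi)^2$ up to a boundary integral. The decisive algebraic step is to recognize that the commutator terms generated by interchanging covariant derivatives, together with the torsion and $\varphi_0$ terms, reassemble through the definitions of $P_\beta\varphi$, $\overline{P}_\beta\varphi$ and the identities (\ref{111a}) into $\delta_b(P\varphi)+\overline{\delta}_b(\overline{P}\varphi)$, whose $M$-integral equals $\tfrac14\int_M\varphi P_0\varphi\,d\mu$ by (\ref{Paneitz}); tracking the constants here yields the coefficient $\tfrac{n+2}{4n}$.

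The real work lies in organizing the boundary integrals. Along $\Sigma$ I would fix a frame adapted to both the pseudohermitian structure and the hypersurface: the Legendrian normal $e_{2n}$, the complementary horizontal directions $e_1,\dots,e_{2n-1}$ spanning $T\Sigma\cap\xi$, and the Reeb field decomposed off the singular set as $T=\alpha e_{2n}+(\text{tangential})$. Every divergence term produced above must be re-expressed in this frame; integrating its tangential components by parts on $\Sigma$ converts full-space derivatives into the tangential sublaplacian $\Delta_b^t$ and into the operators $P_n\varphi$, $P_{\overline{n}}\varphi$, and $B_{n\overline{\beta}}\varphi$, while the genuinely normal components retain $\varphi_{e_{2n}}$ and $\varphi_{e_n}$. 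The second fundamental form enters through $\langle\nabla_{e_j}e_{2n},e_k\rangle$ and, after the appropriate trace, through the $p$-mean curvature $H_{p.h}$ and the coupling function $\alpha$, producing the final five boundary integrals of (\ref{CR Reilly's formula}).

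The main obstacle is exactly this boundary bookkeeping. Unlike the Riemannian setting, the horizontal/Reeb splitting is not orthogonal to $\Sigma$, so the boundary integrations by parts must be performed in a non-orthonormal, structure-adapted frame, and one must carefully separate which derivatives are tangential to $\Sigma$ (and hence may be integrated by parts there) from those that are genuinely normal. Keeping the numerical coefficients correct through these reductions, and ensuring that the curvature and torsion boundary contributions recombine into the displayed $P_n$, $P_{\overline{n}}$, and $B_{n\overline{\beta}}$ terms rather than leaving residual uncontrolled expressions, is the delicate part. Finally, since $\alpha$ is defined only on $\Sigma\setminus S_\Sigma$, where $\xi$ and $T\Sigma$ coincide on the singular set $S_\Sigma$, a cutoff and limiting argument is needed to justify the boundary integrations near $S_\Sigma$.
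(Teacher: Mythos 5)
Your plan follows essentially the same route as the paper: integrate the CR Bochner formula over $M$, split the horizontal Hessian into its trace, traceless ($B_{\beta\overline{\gamma}}\varphi$) and $\varphi_0$ parts, convert the $\langle J\nabla_b\varphi,\nabla_b\varphi_0\rangle$ and commutator contributions into $\tfrac{n+2}{4n}\int_M\varphi P_0\varphi\,d\mu$ plus boundary terms via the divergence theorem, and then re-express the boundary integral of $(|\nabla_b\varphi|^2)_{e_{2n}}$ in the adapted frame $\{e_1,\dots,e_{2n-1},e_{2n},T\}$ to produce the $\Delta_b^t$, $H_{p.h}$, $\alpha$ and second-fundamental-form terms, exactly as the paper does. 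One small point of bookkeeping: the Bochner formula itself carries $[2Ric-(n-2)Tor]$, and the coefficient $(n+1)$ in the final statement only appears after absorbing the extra torsion produced by the identity relating $\langle J\nabla_b\varphi,\nabla_b\varphi_0\rangle$ to $\langle\nabla_b\varphi,\nabla_b\Delta_b\varphi\rangle$ and $\langle P\varphi+\overline{P}\varphi,d_b\varphi\rangle$, which is also the step that introduces the Paneitz operator.
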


If $(M,J,\theta )$ is a compact pseudohermitian $(2n+1)$-manifold without
boundary, one can check easily that the fourth-order CR Paneitz $P_{0}$ is
self-adjoint. That is 
\begin{equation}
\begin{array}{c}
\int_{M}gP_{0}fd\mu =\int_{M}fP_{0}gd\mu%
\end{array}
\label{1}
\end{equation}%
for all smooth functions $f$ and $g$. However, if $(M,J,\theta )$ is a
compact pseudohermitian $(2n+1)$-manifold with the smooth boundary $\Sigma ,$
it follows from (\ref{A1}) and (\ref{A2}) that (\ref{1}) folds for all
smooth functions with the Dirichlet condition or the Neumann condition as in
(\ref{222}) and (\ref{333}) on $\Sigma $. In particular, it holds in the
situation as in Theorem \ref{TB} and Theorem \ref{TC}.

That is, one can have the following Dirichlet eigenvalue problem or Neumann
eigenvalue problem, respectively : 
\begin{equation}
\left \{ 
\begin{array}{ccl}
P_{0}\varphi & = & \mu _{_{D}}\varphi \  \  \mathrm{on\ }M, \\ 
\varphi & = & 0\  \  \  \  \  \  \mathrm{on\ }\Sigma , \\ 
\Delta _{b}\varphi & = & 0\  \  \  \  \  \  \mathrm{on}\  \Sigma ,%
\end{array}%
\right.  \label{222}
\end{equation}%
and 
\begin{equation}
\left \{ 
\begin{array}{ccl}
P_{0}\phi & = & \mu _{_{N}}\phi \  \  \mathrm{on\ }M, \\ 
\Delta _{b}\phi & = & 0\  \  \  \  \  \  \mathrm{on\ }\Sigma , \\ 
(\Delta _{b}\phi )_{e_{2n}} & = & 0\  \  \  \  \  \  \mathrm{on}\  \Sigma .%
\end{array}%
\right.  \label{333}
\end{equation}%
Hence%
\begin{equation}
\begin{array}{c}
\int_{M}\varphi P_{0}\varphi d\mu \geq \mu _{_{D}}^{1}\int_{M}\varphi
^{2}d\mu%
\end{array}
\label{1a}
\end{equation}%
for the first Dirichlet eigenvalue $\mu _{_{D}}^{1}$ and all smooth
functions with $\varphi =0=\Delta _{b}\varphi $ on $\Sigma .$ Similarly 
\begin{equation}
\begin{array}{c}
\int_{M}\phi P_{0}\phi d\mu \geq \mu _{_{N}}^{1}\int_{M}\phi ^{2}d\mu%
\end{array}
\label{1aa}
\end{equation}%
for the first Neumann eigenvalue $\mu _{_{N}}^{1}$ and all smooth functions
with $\Delta _{b}\phi =0=(\Delta _{b}\phi )_{e_{2n}}$ on $\Sigma $. In
general, $\mu _{_{D}}^{1}$ and $\mu _{_{N}}^{1}$ are not always nonnegative.

\begin{definition}
\label{d1} Let $(M,J,\theta )$ be a compact pseudohermitian $(2n+1)$%
-manifold with the smooth boundary $\Sigma .$ We say that the CR Paneitz
operator $P_{0}$ with respect to $(J,\theta )$ is nonnegative if 
\begin{equation*}
\begin{array}{c}
\int_{M}\varphi P_{0}\varphi d\mu \geq 0%
\end{array}%
\end{equation*}%
for all smooth functions with suitable boundary conditions as in Dirichlet
eigenvalue problem or Neumann eigenvalue problem of $P_{0}$.
\end{definition}

\begin{remark}
\label{r1} Let $(M,J,\theta )$ be a compact pseudohermitian $(2n+1)$%
-manifold of vanishing torsion with the smooth boundary $\Sigma $. It
follows from (\ref{111a}) that the Kohn Laplacian $\square _{b}$ and $%
\overline{\square }$ commute and they are diagonalized simultaneously with 
\begin{equation*}
P_{0}\varphi =2\square _{b}\overline{\square }_{b}\varphi =2\overline{%
\square }_{b}\square _{b}\varphi .
\end{equation*}%
Then the corresponding CR Paneitz operator $P_{0}$ is nonnegative (\cite{ccc}%
). That is 
\begin{equation*}
\mu _{_{D}}^{1}=0=\mu _{_{N}}^{1}.
\end{equation*}

For the first consequence of CR Reilly formula, we can consider the
following Dirichlet eigenvalue problem: 
\begin{equation}
\left \{ 
\begin{array}{ccll}
\Delta _{b}\varphi  & = & -\lambda _{1}\varphi  & \mathrm{on\ }M, \\ 
\varphi  & = & 0 & \mathrm{on\ }\Sigma .%
\end{array}%
\right.   \label{1b}
\end{equation}%
Then we have the following first Dirichlet eigenvalue estimate:
\end{remark}

\begin{theorem}
\label{TB} Let $(M,J,\theta )$ be a compact pseudohermitian $(2n+1)$%
-manifold with the smooth boundary $\Sigma $. If the pseudohermitian mean
curvature $H_{p.h}$ is nonnegative and 
\begin{equation*}
\begin{array}{c}
\lbrack Ric-\frac{n+1}{2}Tor](Z,Z)\geq k\left \langle Z,Z\right \rangle%
\end{array}%
\end{equation*}%
for all $Z\in T_{1,0}$ and a positive constant $k$, then

(i) For $n\geq 2,$ 
\begin{equation*}
\begin{array}{c}
\lambda _{1}\geq \frac{nk}{n+1};%
\end{array}%
\end{equation*}

(ii)\ For $n=1,$ 
\begin{equation*}
\begin{array}{c}
\lambda _{1}\geq \frac{k+\sqrt{k^{2}+6\mu _{_{D}}^{1}}}{4}%
\end{array}%
\end{equation*}%
with $\mu _{_{D}}^{1}\geq -\frac{k^{2}}{6}$. In addition if $P_{0}$ is
nonnegative, in particular if the torsion is vanishing, then%
\begin{equation*}
\begin{array}{c}
\lambda _{1}\geq \frac{k}{2}.%
\end{array}%
\end{equation*}
\end{theorem}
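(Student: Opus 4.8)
The plan is to test the CR Reilly formula (\ref{CR Reilly's formula}) against a first Dirichlet eigenfunction and extract a differential inequality for $\lambda_1$. First I would fix a first eigenfunction $\varphi$ of (\ref{1b}), so that $\Delta_b\varphi = -\lambda_1\varphi$ on $M$ and $\varphi = 0$ on $\Sigma$. Since $\varphi$ vanishes on $\Sigma$, we also have $\Delta_b\varphi = -\lambda_1\varphi = 0$ on $\Sigma$, so $\varphi$ satisfies exactly the Dirichlet boundary conditions of (\ref{222}); in particular the self-adjointness relation and the eigenvalue bound (\ref{1a}) are available. I would then substitute $\varphi$ into (\ref{CR Reilly's formula}) and simplify both sides.

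The technical heart is the reduction of the boundary integrals. Working in a frame adapted to $\Sigma$ with Legendrian normal $e_{2n}$, so that $e_1,\dots,e_{2n-1}$ span the horizontal tangent directions, the condition $\varphi|_\Sigma = 0$ forces every tangential first derivative to vanish: $\varphi_{e_j} = 0$ for $1\le j\le 2n-1$ (hence $\varphi_{e_n} = 0$), while the horizontal gradient reduces to its normal part $\nabla_b\varphi = \varphi_{e_{2n}}e_{2n}$ on $\Sigma$. This immediately annihilates the first boundary term (which carries a factor $\varphi$), the terms containing $\varphi_{e_n}$, and the sum $\sum_{j,k}\langle\nabla_{e_j}e_{2n},e_k\rangle\varphi_{e_j}\varphi_{e_k}$. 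Differentiating $\varphi_{e_j} = 0$ tangentially shows $\Delta_b^t\varphi = 0$ on $\Sigma$, and $\Delta_b\varphi = 0$ on $\Sigma$ kills the two terms containing $\Delta_b\varphi$ and $\Delta_b^t\varphi$. The delicate point is the term $\frac{i}{2}C_n\int_\Sigma(\varphi^{\overline\beta}B_{n\overline\beta}\varphi - \varphi^\beta B_{\overline n\beta}\varphi)\,d\Sigma_p$: after expressing $\varphi^\beta$ through $\varphi_{e_{2n}}$ it still involves genuine second-order normal derivatives, so I expect the main obstacle to be showing, via the pseudohermitian structure equations together with $\Delta_b\varphi = 0$ on $\Sigma$, that this contribution reorganizes into (or cancels against) the mean-curvature term. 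After this reduction the entire boundary contribution should collapse to $\frac14 C_n\int_\Sigma H_{p.h}\varphi_{e_{2n}}^2\,d\Sigma_p$, which is nonnegative by the hypothesis $H_{p.h}\ge 0$.

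On the interior side I would handle the curvature term and the Hessian term. Using $[Ric - \frac{n+1}{2}Tor](Z,Z)\ge k\langle Z,Z\rangle$ with $Z = (\nabla_b\varphi)_{\mathbb{C}}$ gives $[2Ric-(n+1)Tor]((\nabla_b\varphi)_{\mathbb{C}},(\nabla_b\varphi)_{\mathbb{C}})\ge 2k\sum_\beta\varphi^\beta\varphi_{\overline\beta} = k|\nabla_b\varphi|^2$, and integrating by parts (the boundary term vanishing because $\varphi|_\Sigma = 0$) yields $\int_M|\nabla_b\varphi|^2\,d\mu = -\int_M\varphi\Delta_b\varphi\,d\mu = \lambda_1\int_M\varphi^2\,d\mu$, so the curvature term is at least $k\lambda_1\int_M\varphi^2\,d\mu$. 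The pure Hessian term enters (\ref{CR Reilly's formula}) as $-2\int_M\sum_{\beta,\gamma}|\varphi_{\beta\gamma}|^2\,d\mu\le 0$, so I would simply discard it; together with $(\Delta_b\varphi)^2 = \lambda_1^2\varphi^2$ this bounds the left side from above by $\frac{n+1}{n}\lambda_1^2\int_M\varphi^2\,d\mu$. Combining the two sides of the identity produces the master inequality $\frac{n+1}{n}\lambda_1^2\int_M\varphi^2\,d\mu \ge \frac{n+2}{4n}\int_M\varphi P_0\varphi\,d\mu + k\lambda_1\int_M\varphi^2\,d\mu$.

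Finally I would read off the two cases. For $n\ge 2$ the Paneitz term is controlled by the automatic nonnegativity of $P_0$, so that $\int_M\varphi P_0\varphi\,d\mu\ge 0$ (that is, $\mu_{_{D}}^{1}\ge 0$); dropping it and dividing by $\lambda_1\int_M\varphi^2\,d\mu>0$ gives $\frac{n+1}{n}\lambda_1\ge k$, i.e. $\lambda_1\ge\frac{nk}{n+1}$. For $n=1$, where $\frac{n+1}{n}=2$ and $\frac{n+2}{4n}=\frac34$, I would instead retain the Paneitz term and bound it below by (\ref{1a}), $\int_M\varphi P_0\varphi\,d\mu\ge\mu_{_{D}}^{1}\int_M\varphi^2\,d\mu$, obtaining $2\lambda_1^2 - k\lambda_1 - \frac34\mu_{_{D}}^{1}\ge 0$. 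Solving this quadratic yields $\lambda_1\ge\frac{k+\sqrt{k^2+6\mu_{_{D}}^{1}}}{4}$, which is real precisely when $\mu_{_{D}}^{1}\ge-\frac{k^2}{6}$; and when $P_0$ is nonnegative (in particular for vanishing torsion, where $\mu_{_{D}}^{1}=0$ by Remark \ref{r1}) the bound specializes to $\lambda_1\ge\frac{k}{2}$.
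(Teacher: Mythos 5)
Your overall route is the paper's route: plug the first Dirichlet eigenfunction into the CR Reilly formula, reduce the boundary terms using $\varphi=\Delta_b\varphi=\Delta_b^t\varphi=\varphi_{e_j}=0$ on $\Sigma$, and arrive at the master inequality $\frac{n+1}{n}\lambda_1^2\int_M\varphi^2\,d\mu\geq\frac{n+2}{4n}\int_M\varphi P_0\varphi\,d\mu+k\lambda_1\int_M\varphi^2\,d\mu$; the quadratic in $\lambda_1$ and the two cases are then read off exactly as in the paper. Your handling of the delicate $B$-term is also essentially right: in the paper's computation (\ref{2ab}) it reduces, under these boundary conditions, to $-\frac{1}{4}C_n\int_\Sigma H_{p.h}\varphi_{e_{2n}}^2\,d\Sigma_p$, i.e.\ it \emph{cancels} the explicit mean-curvature term rather than reproducing it, so the total boundary contribution is zero; either of your two hedged alternatives yields the inequality (\ref{2a}).

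The genuine gap is in the case $n\geq 2$, where you dispose of the Paneitz term by invoking the ``automatic nonnegativity of $P_0$'' and the parenthetical $\mu_D^1\geq 0$. That fact is automatic only on \emph{closed} pseudohermitian manifolds, where for $n\geq2$ one has $\frac{n-1}{8n}\int_M\varphi P_0\varphi\,d\mu=\int_M\sum_{\beta,\gamma}|B_{\beta\overline\gamma}\varphi|^2\,d\mu\geq0$. On a manifold with boundary the same integration by parts produces boundary terms, and the paper explicitly cautions that $\mu_D^1$ need not be nonnegative in general. The paper closes this gap with a dedicated lemma (equations (\ref{2aa})--(\ref{2bb})): for the first Dirichlet eigenfunction the boundary contributions reorganize so that
\begin{equation*}
\frac{n-1}{8n}\int_M\varphi P_0\varphi\,d\mu=\int_M\sum_{\beta,\gamma}\Bigl|\varphi_{\beta\overline\gamma}-\tfrac{1}{n}\varphi_\sigma{}^\sigma h_{\beta\overline\gamma}\Bigr|^2\,d\mu+\frac{1}{8}C_n\int_\Sigma H_{p.h}\varphi_{e_{2n}}^2\,d\Sigma_p,
\end{equation*}
so nonnegativity of $\int_M\varphi P_0\varphi\,d\mu$ for $n\geq2$ \emph{requires} the hypothesis $H_{p.h}\geq0$ and this extra boundary computation --- it is precisely here, not in the Reilly boundary terms (which cancel identically), that the mean-curvature assumption earns its keep. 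Without supplying this identity, your $n\geq2$ conclusion $\lambda_1\geq\frac{nk}{n+1}$ is not justified; the $n=1$ case, which you run entirely through $\mu_D^1$ and the separate nonnegativity assumption on $P_0$, is fine.
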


\begin{remark}
It is known that the sharp first eigenvalue estimate is obtained as in \cite%
{gr}, \cite{ll}, \cite{ch}, \cite{cc2} and \cite{fk} in a closed
pseudohermitian $(2n+1)$-manifold.
\end{remark}

Next we can state the second consequence of the CR Reilly formula (\ref{CR
Reilly's formula}) which served as a CR analogue of Yau conjecture \cite{Y}
on the first eigenvalue estimate of embedded oriented minimal hypersurfaces.
We refer to papers of Choi-Wang \cite{cw} and Tang-Yan \cite{ty} which are
related to Yau conjecture.

As before, $\{e_{1},e_{2},\cdots ,e_{n},e_{n+1},\cdots ,e_{2n-1},\alpha
e_{2n}+T\}$ is the base of $T\Sigma $ for some function $\alpha $ on $\Sigma
\backslash S_{\Sigma }$. It follows from (\ref{2014d}) that $\Delta
_{b}^{t}+\alpha e_{n}$ is a self-adjoint operator with respect to the $p$%
-area element $d\Sigma _{p}$ on $\Sigma $. Hence it is natural to consider
the following CR analogue of eigenvalue problem on the embedded closed $p$%
-minimal ($H_{p.h}=0$) hypersurface $\Sigma $ in a closed pseudohermitian $%
(2n+1)$-manifold $(M,J,\theta )$:%
\begin{equation}
L_{\alpha }u=-\lambda _{1}u,  \label{2015}
\end{equation}%
here 
\begin{equation}
\begin{array}{c}
L_{\alpha }:=\Delta _{b}^{t}+\alpha e_{n}.%
\end{array}
\label{2015-1}
\end{equation}

In this paper, we consider the particular case that $\{e_{1},e_{2},\cdots
,e_{n},e_{n+1},\cdots ,e_{2n-1},T\}$ are always tangent to $\Sigma $ ($%
\alpha =0$) as following:%
\begin{equation}
\begin{array}{c}
L_{0}:=\Delta _{b}^{t}.%
\end{array}
\label{2015-2}
\end{equation}%
That is, we have the first eigenvalue estimate of $L_{0}$ on embedded
oriented hypersurfaces of nonnegative pseudohermitian mean curvature:

\begin{theorem}
\label{TC} Let $\Sigma $ be a compact embedded oriented $p$-minimal
hypersurface with $\alpha =0$ in a closed pseudohermitian $(2n+1)$-manifold $%
(M,J,\theta )$ of vanishing torsion. Suppose that the pseudohermitian Ricci
curvature of $M$ is bounded from below by a positive constant $k$. Then

(i) The first non-zero eigenvalue $\lambda _{1}$ of $L_{0}$ on $\Sigma $ has
a lower bound given by 
\begin{equation*}
\begin{array}{c}
\lambda _{1}\geq \frac{k}{2}.%
\end{array}%
\end{equation*}

(ii) In case of $n=1$ if the equality holds, $(M,J,\theta )$ must be a
closed spherical pseudohermitian $3$-manifold and $\Sigma $ be a compact
embedded oriented $p$-minimal surface of genus $\leq 1.$ Moreover, $%
(M,J,\theta )$ is the the standard CR $3$-sphere $(\mathbf{S}^{3},\widehat{J}%
,\widehat{\theta })$ if it is simply connected.
\end{theorem}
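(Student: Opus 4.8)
The plan is to transplant Reilly's eigenvalue argument, in the form later used by Choi--Wang \cite{cw}, into pseudohermitian geometry, with Theorem~\ref{Reilly'sformula} replacing the classical Reilly identity. Since $\Sigma$ is a compact embedded oriented hypersurface in the closed manifold $M$, I would first write $\Sigma=\partial\Omega$ for a domain $\Omega\subset M$ that it bounds. Let $u$ be a first nonzero eigenfunction of $L_{0}=\Delta_{b}^{t}$ on $\Sigma$, so that $\Delta_{b}^{t}u=-\lambda_{1}u$ and $\int_{\Sigma}u\,d\Sigma_{p}=0$; then extend $u$ to $\Omega$ by solving the subelliptic Dirichlet problem $\Delta_{b}\varphi=0$ in $\Omega$ with $\varphi|_{\Sigma}=u$. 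This harmonic extension is chosen for two reasons. First, it makes $(\Delta_{b}\varphi)^{2}\equiv0$, so the left-hand side of (\ref{CR Reilly's formula}) collapses to $-2\int_{\Omega}\sum_{\beta,\gamma}|\varphi_{\beta\gamma}|^{2}\,d\mu\le0$. Second, since $\Delta_{b}\varphi\equiv0$ forces both $\Delta_{b}\varphi=0$ and $(\Delta_{b}\varphi)_{e_{2n}}=0$ on $\Sigma$, the function $\varphi$ lies in the Neumann class (\ref{333}); by Remark~\ref{r1} the vanishing torsion then yields $\int_{\Omega}\varphi P_{0}\varphi\,d\mu\ge0$.

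With this $\varphi$ I would evaluate each term of (\ref{CR Reilly's formula}). The hypotheses prune it sharply: vanishing torsion deletes the $(n+1)Tor$ piece and reduces the curvature integral to $2\int_{\Omega}Ric((\nabla_{b}\varphi)_{\mathbb{C}},(\nabla_{b}\varphi)_{\mathbb{C}})\,d\mu$; the $p$-minimality $H_{p.h}=0$ annihilates $\tfrac14 C_{n}\int_{\Sigma}H_{p.h}\varphi_{e_{2n}}^{2}\,d\Sigma_{p}$; the hypothesis $\alpha=0$ annihilates $\tfrac12 C_{n}\int_{\Sigma}\alpha\varphi_{e_{n}}\varphi_{e_{2n}}\,d\Sigma_{p}$; and $\Delta_{b}\varphi=0$ annihilates $\tfrac{3}{4n}C_{n}\int_{\Sigma}\varphi_{e_{2n}}\Delta_{b}\varphi\,d\Sigma_{p}$. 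The decisive surviving boundary term is
\[
C_{n}\int_{\Sigma}\varphi_{e_{2n}}\Delta_{b}^{t}\varphi\,d\Sigma_{p}=-\lambda_{1}C_{n}\int_{\Sigma}u\,\varphi_{e_{2n}}\,d\Sigma_{p},
\]
which, by the divergence identity for $\Delta_{b}$ together with $\Delta_{b}\varphi=0$, is a positive multiple of $-\lambda_{1}\int_{\Omega}|\nabla_{b}\varphi|^{2}\,d\mu$.

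The remaining step is to combine these facts. Using $Ric\ge k$ and the normalization relating $|\nabla_{b}\varphi|^{2}$ to $|(\nabla_{b}\varphi)_{\mathbb{C}}|^{2}$, the curvature integral is bounded below by $\tfrac{k}{2}\int_{\Omega}|\nabla_{b}\varphi|^{2}\,d\mu$; matching this constant against the one produced by the divergence identity in the main boundary term is exactly what produces the sharp coefficient $\tfrac12$. Discarding the nonpositive interior terms $-2\int_{\Omega}\sum|\varphi_{\beta\gamma}|^{2}$ and $-\tfrac{n+2}{4n}\int_{\Omega}\varphi P_{0}\varphi$, identity (\ref{CR Reilly's formula}) then rearranges to $\lambda_{1}\int_{\Omega}|\nabla_{b}\varphi|^{2}\,d\mu\ge\tfrac{k}{2}\int_{\Omega}|\nabla_{b}\varphi|^{2}\,d\mu$, and since $u$ is nonconstant the extension has $\int_{\Omega}|\nabla_{b}\varphi|^{2}>0$, giving $\lambda_{1}\ge\tfrac{k}{2}$. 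The genuine difficulty is hidden here: one must show that the remaining boundary terms
\[
-\tfrac{n+2}{2n}iC_{n}\!\int_{\Sigma}\!\varphi(P_{n}\varphi-P_{\overline{n}}\varphi)+\tfrac{i}{2}C_{n}\!\int_{\Sigma}\!(\varphi^{\overline{\beta}}B_{n\overline{\beta}}\varphi-\varphi^{\beta}B_{\overline{n}\beta}\varphi)-\tfrac14 C_{n}\!\int_{\Sigma}\!\varphi_{0}\varphi_{e_{n}}+\tfrac14 C_{n}\!\int_{\Sigma}\!\sum_{j,k=1}^{2n-1}\!\langle\nabla_{e_{j}}e_{2n},e_{k}\rangle\varphi_{e_{j}}\varphi_{e_{k}}
\]
either vanish or assemble with a favorable sign. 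These have no Riemannian analogue, and disposing of them is precisely where $\alpha=0$, $H_{p.h}=0$ and $A=0$ must be used together, through the structure equations of $\Sigma$ and integration by parts along $\Sigma$ (using that $T$ is tangent to $\Sigma$); I regard this, together with the boundary regularity of the subelliptic extension across the characteristic set $S_{\Sigma}$, as the main obstacle.

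For part (ii), with $n=1$ and equality $\lambda_{1}=\tfrac{k}{2}$, every inequality above must become an equality. This forces simultaneously $\varphi_{11}\equiv0$ on $\Omega$, $\int_{\Omega}\varphi P_{0}\varphi=0$ (hence $\square_{b}\overline{\square}_{b}\varphi=0$ by Remark~\ref{r1} and (\ref{111a})), and $Ric$ attaining its lower bound $k$ in the direction $(\nabla_{b}\varphi)_{\mathbb{C}}$ wherever $\nabla_{b}\varphi\neq0$. The vanishing of the $(2,0)$-Hessian $\varphi_{11}$ together with $\Delta_{b}\varphi=0$ exhibits $\varphi$ as a CR-pluriharmonic function satisfying an overdetermined system; I would argue that this forces the Cartan curvature tensor of $(M,J,\theta)$ to vanish, so that the $3$-manifold is spherical. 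A Gauss--Bonnet argument on the $p$-minimal surface $\Sigma$, whose intrinsic curvature inherits the positivity of $k$, then bounds its genus by $1$. Finally, a simply connected closed spherical pseudohermitian $3$-manifold is CR-equivalent to the standard CR sphere $(\mathbf{S}^{3},\widehat{J},\widehat{\theta})$ by CR uniformization, which yields the last assertion.
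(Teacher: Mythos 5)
Your setup coincides with the paper's: harmonically extend the first eigenfunction $u$ into a component of $M\setminus\Sigma$, note that $\Delta_{b}\varphi\equiv 0$ puts $\varphi$ in the Neumann class so that vanishing torsion gives $\int\varphi P_{0}\varphi\,d\mu\geq 0$, kill the $H_{p.h}$, $\alpha$, and $\Delta_{b}\varphi$ boundary terms, and convert $C_{n}\int_{\Sigma}\varphi_{e_{2n}}\Delta_{b}^{t}\varphi\,d\Sigma_{p}$ into $-2\lambda_{1}\int|\nabla_{b}\varphi|^{2}d\mu$. But you stop exactly where the proof actually happens. You flag the leftover boundary terms (the $P_{n}-P_{\overline{n}}$ term, the $B_{n\overline{\beta}}$ term, $\varphi_{0}\varphi_{e_{n}}$, and the second-fundamental-form term) as ``the main obstacle'' and express the hope that structure equations and integration by parts on $\Sigma$ will make them vanish or acquire a favorable sign. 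They do not vanish, and no intrinsic computation on $\Sigma$ disposes of them. The paper's mechanism is the Choi--Wang two-sided argument, which you cite in your first sentence but never deploy: because the torsion vanishes and the pseudohermitian Ricci curvature is positive, the Webster metric has positive Ricci curvature, so $H^{1}(M,\mathbb{R})=0$ and $\Sigma$ separates $M$ into \emph{two} components $M_{1},M_{2}$ with common boundary $\Sigma$. Since $\alpha=0$ forces $T$ to be tangent to $\Sigma$, the offending boundary integrands $n\varphi_{0}\varphi_{e_{n}}$, $\sum_{j\neq n,2n}\varphi_{e_{j}e_{j}}\varphi_{e_{2n}}$ and $\langle\nabla_{e_{i}}e_{2n},e_{j}\rangle\varphi_{e_{i}}\varphi_{e_{j}}$ (after the identities (\ref{03})--(\ref{04}) reassemble the $P$ and $B$ terms into these) depend only on the boundary data $u$ and flip sign when the Legendrian normal $e_{2n}$ is reversed, i.e.\ when one switches from $M_{1}$ to $M_{2}$. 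One therefore \emph{chooses} the component on which their sum is nonnegative, and only then does the inequality collapse to $0\geq(k-2\lambda_{1})\int_{D}|\nabla_{b}\varphi|^{2}d\mu$. Without this choice-of-side step your argument does not close; note also that you assume $\Sigma=\partial\Omega$ without the topological justification that makes the two-sided choice available.

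Two smaller points. Your constant-tracking is loose: with the normalization $\langle(\nabla_{b}\varphi)_{\mathbb{C}},(\nabla_{b}\varphi)_{\mathbb{C}}\rangle=\tfrac12|\nabla_{b}\varphi|^{2}$ the curvature term contributes $k\int|\nabla_{b}\varphi|^{2}$ (not $\tfrac{k}{2}$), which balances against $2\lambda_{1}\int|\nabla_{b}\varphi|^{2}$ from the tangential-Laplacian term; the quotient is still $\tfrac{k}{2}$ but the bookkeeping should be fixed. For part (ii), the paper argues more directly than you propose: equality forces the Webster scalar curvature to be the constant $k$, and since $A_{11}=0$ the Cartan tensor $Q_{11}=\tfrac16 W_{11}+\cdots$ vanishes immediately, so $(M,J,\theta)$ is spherical; the genus bound is then quoted from Cheng--Hwang--Malchiodi--Yang rather than derived by Gauss--Bonnet.
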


Let $(M,J,\theta )$ be a closed spherical pseudohermitian $3$-manifold.
Recall (\cite{cc1}) that we call a CR structure $J$ spherical if Cartan
curvature tensor $Q_{11}$ vanishes identically. Here 
\begin{equation*}
\begin{array}{c}
Q_{11}=\frac{1}{6}W_{11}+\frac{i}{2}WA_{11}-A_{11,0}-\frac{2i}{3}A_{11,%
\overset{\_}{1}1}.%
\end{array}%
\end{equation*}%
Note that $(M,J,\theta )$ is called a spherical pseudohermitian $3$-manifold
if $J$ is a spherical structure. We observe that the spherical structure is
CR invariant and a closed spherical pseudohermitian $3$-manifold $%
(M,J,\theta )$ is locally CR equivalent to the standard pseudohermitian $3$%
-sphere $(\mathbf{S}^{3},\widehat{J},\widehat{\theta }).$

Note that for an $p$-minimal Clifford torus $\Sigma _{0}=S^{1}(\frac{\sqrt{2}%
}{2})\times S^{1}(\frac{\sqrt{2}}{2})\subset \mathbb{R}^{2}\times \mathbb{R}%
^{2}$ in the standard CR $3$-sphere $\mathbf{S}^{3}$ (i.e. $k=2$ and $%
A_{11}=0)$, $T$ is always tangent to $\Sigma _{0}$ (i.e. $\alpha =0$).
Furthermore, the coordinate function $x_{i}$ of $\Sigma _{0}$ is the
eigenfunction of the tangential sublaplacian $\Delta _{b}^{t}$ with 
\begin{equation*}
\Delta _{b}^{t}x_{i}=-x_{i},\  \ i=1,...4.
\end{equation*}

Then in view of Theorem \ref{TC}, we have the following CR analogue of Yau
conjecture on the first eigenvalue estimate of embedded oriented $p$-minimal
surfaces.

\begin{conjecture}
The first eigenvalue of $L_{0}$ on any closed embedded $p$-minimal surface
of genus $\leq 1$ in the standard CR $3$-sphere $(\mathbf{S}^{3},\widehat{J},%
\widehat{\theta })$ is just $1$.
\end{conjecture}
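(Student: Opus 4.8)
The plan is to squeeze $\lambda_1$ between a lower bound that is already available and a matching upper bound produced by the ambient coordinate functions. Since the standard CR $3$-sphere $(\mathbf{S}^{3},\widehat{J},\widehat{\theta})$ has vanishing torsion and pseudohermitian Ricci curvature equal to the positive constant $k=2$, every closed embedded $p$-minimal surface $\Sigma$ with $\alpha=0$ satisfies the hypotheses of Theorem~\ref{TC} in the case $n=1$, whence $\lambda_1\geq k/2=1$. Thus the entire content of the conjecture is the reverse inequality $\lambda_1\leq 1$, after which the value $1$ is forced to be attained exactly.

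For the upper bound I would feed the four Euclidean coordinate functions $x_1,\dots,x_4$ of $\mathbf{S}^{3}\subset\mathbb{C}^{2}=\mathbb{R}^{4}$, restricted to $\Sigma$, into the variational characterization $\lambda_1=\inf\{-\int_{\Sigma}f\,\Delta_b^t f\,d\Sigma_p\,/\,\int_{\Sigma}f^{2}\,d\Sigma_p:\ f\not\equiv\text{const},\ \int_{\Sigma}f\,d\Sigma_p=0\}$. The first step is to compute the tangential sublaplacian of each restricted coordinate: using the constraint $\sum_i x_i^{2}\equiv 1$ on $\mathbf{S}^{3}$, the $p$-minimality $H_{p.h}=0$, and $\alpha=0$ (so that the Reeb field $T$ is tangent to $\Sigma$), one expects the identity $\Delta_b^t x_i=-x_i$ to persist for every closed embedded $p$-minimal $\Sigma$, exactly as it does for the Clifford torus in the displayed example. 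Granting this identity, each centered coordinate becomes an eigenfunction of eigenvalue $1$; summing the four Rayleigh quotients against $\sum_i x_i^{2}=1$ then yields $\lambda_1\leq 1$ and closes the estimate.

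The step that keeps this at the level of a conjecture is the balancing needed to make the $x_i$ legitimate first trial functions, i.e.\ to arrange $\int_{\Sigma}x_i\,d\Sigma_p=0$ for all $i$ simultaneously. In the Riemannian Yau problem this is the Hersch--Li--Yau center-of-mass trick: one precomposes with a conformal automorphism of the sphere and kills all first moments by a Brouwer degree argument, crucially exploiting the conformal invariance of the two-dimensional Dirichlet energy. Here the natural symmetry group is the CR automorphism group $\mathrm{PU}(2,1)$ of $(\mathbf{S}^{3},\widehat{J},\widehat{\theta})$; I would parametrize these CR-M\"obius maps by the open unit ball, define the induced center-of-mass map into $\mathbb{R}^{4}$, and extract a zero by degree theory, with the hypothesis $\text{genus}\leq 1$ entering to control the topology and the boundary behaviour of this map.

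The main obstacle, and the reason the statement remains only conjectural, is the lack of a clean conformal invariance for the sub-elliptic energy $-\int_{\Sigma}f\,\Delta_b^t f\,d\Sigma_p$ under $\mathrm{PU}(2,1)$. Because $\Delta_b^t$ differentiates only in the single Legendrian direction $e_1$ while the Reeb direction $T$ is omitted, the Rayleigh quotient need not be preserved, even up to a controlled factor, when $\Sigma$ is transported by a CR automorphism, so one cannot transplant the balanced coordinates back to the original surface without risking an increase of energy. Overcoming this would require either a genuinely CR-conformal monotonicity estimate for the sub-elliptic Dirichlet energy, or a substitute for the coordinate-function test functions that is adapted to $(\mathbf{S}^{3},\widehat{J},\widehat{\theta})$, for which $p$-minimality still produces the eigenvalue identity while a degree argument continues to close. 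I expect this conformal-invariance gap, together with the need to control the singular set $S_{\Sigma}$ where the tangency function $\alpha$ degenerates, to be the decisive difficulty.
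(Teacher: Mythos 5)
This statement is a \emph{conjecture}; the paper offers no proof of it. The only support the paper provides is (a) the lower bound $\lambda_{1}\geq k/2=1$ from Theorem \ref{TC} (since $(\mathbf{S}^{3},\widehat{J},\widehat{\theta})$ has vanishing torsion and $k=2$), and (b) the verification that the Clifford torus attains the value $1$ via $\Delta_{b}^{t}x_{i}=-x_{i}$. Your proposal correctly reproduces (a), and you are right that the entire remaining content is the upper bound $\lambda_{1}\leq 1$. You are also honest that your argument does not close, which is consistent with the status of the statement. Note in passing that the hypothesis $\alpha=0$ forces $T$ to be a nonvanishing tangent field on $\Sigma$, so $\chi(\Sigma)=0$ and the conjecture is really about tori.

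There are two substantive problems with your sketch. First, the load-bearing step is the claim that $\Delta_{b}^{t}x_{i}=-x_{i}$ ``persists for every closed embedded $p$-minimal $\Sigma$'' with $\alpha=0$. The paper asserts this only for the Clifford torus, and it is not a formal consequence of $H_{p.h}=0$ and $\alpha=0$ in the way the Riemannian identity $\Delta_{\Sigma}x_{i}=-nx_{i}$ follows from minimality: here $\Delta_{b}^{t}=\tfrac{1}{2}e_{1}^{2}$ differentiates only along the characteristic foliation, so the identity amounts to a strong geometric statement about the Legendrian characteristic curves of $\Sigma$ (essentially that they are great circles suitably parametrized). That is precisely what is unproved, and it is the real gap. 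Second, your center-of-mass/degree-theory discussion via $\mathrm{PU}(2,1)$ misdiagnoses the difficulty. If the eigenfunction identity did hold, no balancing would be needed: $L_{0}$ is self-adjoint with respect to $d\Sigma_{p}$ when $\alpha=0$ (by (\ref{2014d})), so any eigenfunction with nonzero eigenvalue is automatically $L^{2}$-orthogonal to the constants, i.e.\ $\int_{\Sigma}x_{i}\,d\Sigma_{p}=0$ for free, and $1$ being an eigenvalue already forces $\lambda_{1}\leq 1$. The Hersch--Li--Yau conformal balancing belongs to the inequality $\lambda_{1}A\leq 8\pi$ for arbitrary metrics on $S^{2}$, not to the Yau-type problem of minimal submanifolds of spheres, so the ``conformal-invariance gap'' you identify as decisive is not where the obstruction lies.
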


Finally, we propose a CR analogue of Lawson conjecture (\cite{la}):

\begin{conjecture}
Any closed embedded $p$-minimal torus (with $\alpha =0$) in the standard CR $%
3$-sphere $\mathbf{S}^{3}$ is the Clifford torus.
\end{conjecture}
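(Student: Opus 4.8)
The plan is to exploit the hypothesis $\alpha =0$ to reduce the conjecture to a one-dimensional problem on the Hopf base $\mathbf{S}^{2}$, thereby bypassing entirely the delicate two-point maximum principle that Brendle used in the Riemannian Lawson conjecture. First I would observe that $\alpha =0$ means precisely that the Reeb field $T$ is tangent to $\Sigma $ at every regular point: in the decomposition of $T$ into its $\Sigma $-tangential and Legendrian-normal parts, $\alpha $ is the $e_{2}$-component, so $\alpha =0$ forces $T\in T\Sigma $ on $\Sigma \backslash S_{\Sigma }$. Since $\theta (T)=1$ while $T_{p}\Sigma =\xi _{p}$ at any point of the singular set, $T$ can never be tangent along $S_{\Sigma }$; by continuity of the tangent planes of the smooth surface $\Sigma $ this is incompatible with $T$ being tangent on the dense open regular set unless $S_{\Sigma }=\varnothing $. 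Hence $\Sigma $ is a smooth surface with empty singular set on which $T$ is everywhere tangent.

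Because $T$ is tangent everywhere, $\Sigma $ is invariant under the Reeb (Hopf) flow, which on $(\mathbf{S}^{3},\widehat{J},\widehat{\theta })$ is free with quotient the round $\mathbf{S}^{2}$. Thus $\Sigma =\pi ^{-1}(\gamma )$ for a smooth embedded closed curve $\gamma $ in $\mathbf{S}^{2}$, where $\pi :\mathbf{S}^{3}\rightarrow \mathbf{S}^{2}$ is the Hopf map; and since the fibers are circles, the oriented surface $\pi ^{-1}(\gamma )$ is automatically an embedded torus, so the torus hypothesis carries no extra content here. At a point of $\Sigma $ the tangent plane is spanned by $T$ (vertical) and the horizontal lift $e_{1}$ of the unit tangent $\dot{\gamma}$, while the Legendrian normal $e_{2}=e_{2n}$ is the horizontal lift of the unit normal $\nu $ of $\gamma $ in $\mathbf{S}^{2}$.

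The key computation is to express the $p$-mean curvature $H_{p.h}=\langle \nabla _{e_{1}}e_{1},e_{2}\rangle $ in terms of the geodesic curvature $\kappa _{g}$ of $\gamma $. Applying the O'Neill formulas for the Riemannian submersion $\pi $ (or computing directly in $\mathbb{C}^{2}$ coordinates on $\mathbf{S}^{3}$), the horizontal part of $\nabla _{e_{1}}e_{1}$ is the horizontal lift of $\nabla _{\dot{\gamma}}^{\mathbf{S}^{2}}\dot{\gamma}=\kappa _{g}\nu $, while the O'Neill correction is purely vertical and hence $e_{2}$-orthogonal; after reconciling the Levi-Civita connection of $\mathbf{S}^{3}$ with the Tanaka-Webster connection via the (bounded) torsion and pseudohermitian terms, one obtains $H_{p.h}=c\,\kappa _{g}\circ \pi $ for a nonzero constant $c$. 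Consequently $H_{p.h}\equiv 0$ is equivalent to $\kappa _{g}\equiv 0$, i.e. $\gamma $ is a great circle of $\mathbf{S}^{2}$; and $\pi ^{-1}$ of a great circle is exactly the Clifford torus $S^{1}(\tfrac{\sqrt{2}}{2})\times S^{1}(\tfrac{\sqrt{2}}{2})$, which completes the identification.

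The main obstacle I anticipate is the last step's curvature bookkeeping: carefully verifying that the $e_{2}$-component of the pseudohermitian acceleration of the characteristic curve $e_{1}$ equals (a nonzero constant multiple of) $\kappa _{g}$, including the correct normalization and the contributions of the pseudohermitian torsion and of the O'Neill tensor, so that $H_{p.h}=0$ indeed forces $\kappa _{g}=0$ with no spurious solutions. A secondary point to make rigorous is the claim that $S_{\Sigma }=\varnothing $ and the global Reeb-invariance of $\Sigma $, for which one should confirm that $\Sigma $ remains a smooth surface across would-be singular points (so that tangent planes vary continuously) and that the regular set is dense. Once the proportionality $H_{p.h}=c\,\kappa _{g}$ is established the remaining assertions are immediate, and the result is in fact considerably more accessible than the classical Lawson conjecture precisely because the hypothesis $\alpha =0$ collapses the two-dimensional embedding problem to the classification of closed geodesics on $\mathbf{S}^{2}$.
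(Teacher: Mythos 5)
You should first be aware that the paper contains \emph{no proof} of this statement: it is posed there as an open conjecture (the CR analogue of Lawson's conjecture), the only surrounding remarks being that the Riemannian Lawson conjecture would follow from Yau's conjecture for the $2$-torus by Montiel--Ros and was in fact proved by Brendle. So there is no argument of the authors to compare yours against; judged on its own merits, however, your reduction is sound and, modulo the computations you yourself flag, it settles the conjecture as literally stated. The chain of reductions is correct: $\alpha =0$ means $T\in T\Sigma $ off $S_{\Sigma }$; the regular set is dense (no open piece of a surface can be an integral surface of a contact distribution), so at a singular point continuity of tangent planes would give $T_{p}\in T_{p}\Sigma =\xi _{p}=\ker \theta _{p}$, contradicting $\theta (T)=1$, whence $S_{\Sigma }=\varnothing $; and a closed embedded surface everywhere tangent to the Reeb field is invariant under its flow, which for $(\mathbf{S}^{3},\widehat{J},\widehat{\theta })$ is the free Hopf action, so $\Sigma =\pi ^{-1}(\gamma )$ for an embedded closed curve $\gamma \subset \mathbf{S}^{2}$. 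For the key identity $H_{p.h}=c\, \kappa _{g}\circ \pi $ you can avoid the O'Neill bookkeeping you worry about: for sections $X,Y$ of $\xi $ the Tanaka--Webster torsion $\nabla _{X}Y-\nabla _{Y}X-[X,Y]$ is purely vertical (proportional to $d\theta (X,Y)T$, for \emph{any} pseudohermitian structure, not just torsion-free ones), so the difference tensor between $\nabla $ (acting on horizontal lifts in horizontal directions, with values in $\xi $) and the lift of the Levi-Civita connection of $\mathbf{S}^{2}$ is symmetric in its two arguments and, both connections being compatible with the Levi metric (the lift, up to a constant, of the round metric), skew in the last two, hence zero; therefore $H_{p.h}=\left \langle \nabla _{e_{1}}e_{1},e_{2}\right \rangle $ is a fixed nonzero multiple of the geodesic curvature of $\gamma $, $p$-minimality forces $\gamma $ to be a great circle, and $\pi ^{-1}(\gamma )$ is a Clifford torus. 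Two points need honest statement: the conclusion is ``the Clifford torus up to a CR automorphism of $\mathbf{S}^{3}$'' --- only one great circle has preimage literally $S^{1}(\frac{\sqrt{2}}{2})\times S^{1}(\frac{\sqrt{2}}{2})$, the others give its images under the $U(2)$-action covering rotations of $\mathbf{S}^{2}$, which is the same sense in which Lawson's conjecture is phrased --- and you should record the standard facts that the Reeb field of $\widehat{\theta }$ is the Hopf field and that $(\xi ,J)$ is exactly the horizontal distribution of the Hopf submersion with $J$ descending to the rotation on $T\mathbf{S}^{2}$, since these are precisely what make the picture work. What your argument really exposes is that the parenthetical hypothesis $\alpha =0$ collapses the problem: Hopf-invariant surfaces are classified by curves in the base, so the conjecture as stated reduces to the classification of closed geodesics on $\mathbf{S}^{2}$ and is far weaker than a genuine CR Lawson rigidity theorem, which would have to treat embedded $p$-minimal tori with $\alpha \not \equiv 0$.
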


If the Yau conjecture is true for the $2$-torus, it was proved in \cite{mr}
that the Lawson conjecture holds which is to say that the only minimally
embedded torus in $\mathbf{S}^{3}$ is the Clifford torus. However, Lawson
conjecture was solved by S. Brendle \cite{b} recently.

We briefly describe the methods used in our proofs. In section $3$, by using
integrating by parts to the CR Bochner formula (\ref{Bochnerformula}), we
can derive the CR version of Reilly's formula which involving a third order
operator $P$ which characterizes CR-pluriharmonic functions and the CR
Paneitz operator $P_{0}.$ By applying the CR Reilly's formula, we are able
to obtain the first Dirichlet eigenvalue estimate as in section $4$ and
derive the first non-zero eigenvalue estimate of (\ref{2015}) on compact
oriented embedded $p$-minimal hypersurfaces in a closed pseudohermitian $%
(2n+1)$-manifold of vanishing torsion as in section $5$.

\section{Basic Notions in Pseudohermitian Geometry}

We first introduce some basic materials in a pseudohermitian $(2n+1)$%
-manifold. Let $(M,J,\theta )$ be a $(2n+1)$-dimensional, orientable,
contact manifold with contact structure $\xi =\ker \theta $. A CR structure
compatible with $\xi $ is an endomorphism $J:\xi \rightarrow \xi $ such that 
$J^{2}=-1$. We also assume that $J$ satisfies the following integrability
condition: If $X$ and $Y$ are in $\xi $, then so is $[JX,Y]+[X,JY]$ and $%
J([JX,Y]+[X,JY])=[JX,JY]-[X,Y]$. A CR structure $J$ can extend to $\mathbb{C}%
\mathbf{\otimes }\xi $ and decomposes $\mathbb{C}\mathbf{\otimes }\xi $ into
the direct sum of $T_{1,0}$ and $T_{0,1}$ which are eigenspaces of $J$ with
respect to eigenvalues $i$ and $-i$, respectively. A manifold $M$ with a CR
structure is called a CR manifold. A pseudohermitian structure compatible
with $\xi $ is a $CR$ structure $J$ compatible with $\xi $ together with a
choice of contact form $\theta $. Such a choice determines a unique real
vector field $T$ transverse to $\xi $, which is called the characteristic
vector field of $\theta $, such that ${\theta }(T)=1$ and $\mathcal{L}_{T}{%
\theta }=0$ or $d{\theta }(T,{\cdot })=0$. Let $\left \{ T,Z_{\beta },Z_{%
\overline{\beta }}\right \} $ be a frame of $TM\otimes \mathbb{C}$, where $%
Z_{\beta }$ is any local frame of $T_{1,0},\ Z_{\overline{\beta }}=\overline{%
Z_{\beta }}\in T_{0,1}$ and $T$ is the characteristic vector field. Then $%
\left \{ \theta ,\theta ^{\beta },\theta ^{\overline{\beta }}\right \} $,
which is the coframe dual to $\left \{ T,Z_{\beta },Z_{\overline{\beta }%
}\right \} $, satisfies 
\begin{equation}
d\theta =ih_{\beta \overline{\gamma }}\theta ^{\beta }\wedge \theta ^{%
\overline{\gamma }},  \label{dtheta}
\end{equation}%
for some positive definite Hermitian matrix of functions $(h_{\beta 
\overline{\gamma }})$. Actually we can always choose $Z_{\beta }$ such that $%
h_{\beta \overline{\gamma }}=\delta _{\beta \gamma }$; hence, throughout
this note, we assume $h_{\beta \overline{\gamma }}=\delta _{\beta \gamma }$.

The Levi form $\left \langle \ ,\  \right \rangle $ is the Hermitian form on $%
T_{1,0}$ defined by%
\begin{equation*}
\left \langle Z,W\right \rangle =-i\left \langle d\theta ,Z\wedge \overline{W%
}\right \rangle .
\end{equation*}%
We can extend $\left \langle \ ,\  \right \rangle $ to $T_{0,1}$ by defining $%
\left \langle \overline{Z},\overline{W}\right \rangle =\overline{%
\left
\langle Z,W\right \rangle }$ for all $Z,W\in T_{1,0}$. The Levi form
induces naturally a Hermitian form on the dual bundle of $T_{1,0}$, also
denoted by $\left \langle \ ,\  \right \rangle $, and hence on all the
induced tensor bundles. Integrating the Hermitian form (when acting on
sections) over $M$ with respect to the volume form $d\mu =\theta \wedge
(d\theta )^{n}$, we get an inner product on the space of sections of each
tensor bundle.

The pseudohermitian connection of $(J,\theta )$ is the connection $\nabla $
on $TM\otimes \mathbb{C}$ (and extended to tensors) given in terms of a
local frame $Z_{\beta }\in T_{1,0}$ by%
\begin{equation*}
\nabla Z_{\beta }=\theta _{\beta }{}^{\gamma }\otimes Z_{\gamma },\quad
\nabla Z_{\overline{\beta }}=\theta _{\overline{\beta }}{}^{\overline{\gamma 
}}\otimes Z_{\overline{\gamma }},\quad \nabla T=0,
\end{equation*}%
where $\theta _{\beta }{}^{\gamma }$ are the $1$-forms uniquely determined
by the following equations:%
\begin{equation}
\begin{split}
d\theta ^{\beta }& =\theta ^{\gamma }\wedge \theta _{\gamma }{}^{\beta
}+\theta \wedge \tau ^{\beta }, \\
0& =\tau _{\beta }\wedge \theta ^{\beta }, \\
0& =\theta _{\beta }{}^{\gamma }+\theta _{\overline{\gamma }}{}^{\overline{%
\beta }},
\end{split}
\label{structure equs}
\end{equation}%
We can write (by Cartan lemma) $\tau _{\beta }=A_{\beta \gamma }\theta
^{\gamma }$ with $A_{\beta \gamma }=A_{\gamma \beta }$. The curvature of the
Tanaka-Webster connection, expressed in terms of the coframe $\{ \theta
=\theta ^{0},\theta ^{\beta },\theta ^{\overline{\beta }}\}$, is 
\begin{equation*}
\begin{split}
\Pi _{\beta }{}^{\gamma }& =\overline{\Pi _{\bar{\beta}}{}^{\overline{\gamma 
}}}=d\theta _{\beta }{}^{\gamma }-\theta _{\beta }{}^{\sigma }\wedge \theta
_{\sigma }{}^{\gamma }, \\
\Pi _{0}{}^{\beta }& =\Pi _{\beta }{}^{0}=\Pi _{0}{}^{\bar{\beta}}=\Pi _{%
\bar{\beta}}{}^{0}=\Pi _{0}{}^{0}=0.
\end{split}%
\end{equation*}%
Webster showed that $\Pi _{\beta }{}^{\gamma }$ can be written 
\begin{equation*}
\Pi _{\beta }{}^{\gamma }=R_{\beta }{}^{\gamma }{}_{\rho \bar{\sigma}}\theta
^{\rho }\wedge \theta ^{\bar{\sigma}}+W_{\beta }{}^{\gamma }{}_{\rho }\theta
^{\rho }\wedge \theta -W^{\gamma }{}_{\beta \bar{\rho}}\theta ^{\bar{\rho}%
}\wedge \theta +i\theta _{\beta }\wedge \tau ^{\gamma }-i\tau _{\beta
}\wedge \theta ^{\gamma }
\end{equation*}%
where the coefficients satisfy 
\begin{equation*}
R_{\beta \overline{\gamma }\rho \bar{\sigma}}=\overline{R_{\gamma \bar{\beta}%
\sigma \bar{\rho}}}=R_{\overline{\gamma }\beta \bar{\sigma}\rho }=R_{\rho 
\overline{\gamma }\beta \bar{\sigma}},\  \ W_{\beta \overline{\gamma }\rho
}=W_{\rho \overline{\gamma }\beta }.
\end{equation*}

We will denote components of covariant derivatives with indices preceded by
comma; thus write $A_{\rho \beta ,\gamma }$. The indices $\{0,\beta ,%
\overline{\beta }\}$ indicate derivatives with respect to $\{T,Z_{\beta },Z_{%
\overline{\beta }}\}$. For derivatives of a scalar function, we will often
omit the comma, for instance, $u_{\beta }=Z_{\beta }u,\ u_{\gamma \bar{\beta}%
}=Z_{\bar{\beta}}Z_{\gamma }u-\theta _{\gamma }{}^{\rho }(Z_{\bar{\beta}%
})Z_{\rho }u,\ u_{0}=Tu$ for a smooth function $u$ .

For a real function $u$, the subgradient $\nabla _{b}$ is defined by $\nabla
_{b}u\in \xi $ and $\left \langle Z,\nabla _{b}u\right \rangle =du(Z)$ for
all vector fields $Z$ tangent to contact plane. Locally $\nabla
_{b}u=u^{\beta }Z_{\beta }+u^{\overline{\beta }}Z_{\overline{\beta }}$. We
can use the connection to define the subhessian as the complex linear map 
\begin{equation*}
(\nabla ^{H})^{2}u:T_{1,0}\oplus T_{0,1}\rightarrow T_{1,0}\oplus T_{0,1}%
\text{ \ by \ }(\nabla ^{H})^{2}u(Z)=\nabla _{Z}\nabla _{b}u.
\end{equation*}%
In particular, 
\begin{equation*}
\begin{array}{c}
|\nabla _{b}u|^{2}=2\sum_{\beta }u_{\beta }u^{\beta },\quad |\nabla
_{b}^{2}u|^{2}=2\sum_{\beta ,\gamma }(u_{\beta \gamma }u^{\beta \gamma
}+u_{\beta \overline{\gamma }}u^{\beta \overline{\gamma }}).%
\end{array}%
\end{equation*}%
Also the sublaplacian is defined by 
\begin{equation*}
\begin{array}{c}
\Delta _{b}u=Tr\left( (\nabla ^{H})^{2}u\right) =\sum_{\beta }(u_{\beta
}{}^{\beta }+u_{\overline{\beta }}{}^{\overline{\beta }}).%
\end{array}%
\end{equation*}%
The pseudohermitian Ricci tensor and the torsion tensor on $T_{1,0}$ are
defined by 
\begin{equation*}
\begin{array}{l}
Ric(X,Y)=R_{\gamma \bar{\beta}}X^{\gamma }Y^{\bar{\beta}} \\ 
Tor(X,Y)=i\tsum_{\gamma ,\beta }(A_{\overline{\gamma }\bar{\beta}}X^{%
\overline{\gamma }}Y^{\bar{\beta}}-A_{\gamma \beta }X^{\gamma }Y^{\beta }),%
\end{array}%
\end{equation*}%
where $X=X^{\gamma }Z_{\gamma },\ Y=Y^{\beta }Z_{\beta }$.

\section{The CR Reilly's Formula}

Let $M$ be a compact pseudohermitian $(2n+1)$-manifold with boundary $\Sigma 
$. We write $\theta _{\gamma }^{\; \text{\ }\beta }=\omega _{\gamma }^{\;%
\text{\ }\beta }+i\tilde{\omega}_{\gamma }^{\; \text{\ }\beta }$ with $%
\omega _{\gamma }^{\; \text{\ }\beta }=\QTR{up}{\func{Re}}(\theta _{\gamma
}^{\; \text{\ }\beta })$, $\tilde{\omega}_{\gamma }^{\; \text{\ }\beta }=%
\QTR{up}{\func{Im}}(\theta _{\gamma }^{\; \text{\ }\beta })$ and $Z_{\beta }=%
\frac{1}{2}(e_{\beta }-ie_{n+\beta })$ for real vectors $e_{\beta }$, $%
e_{n+\beta }$, $\beta =1,\cdots ,n$. It follows that $e_{n+\beta }=Je_{\beta
}$. Let $e^{\beta }=\QTR{up}{\func{Re}}(\theta ^{\beta })$, $e^{n+\beta }=%
\QTR{up}{\func{Im}}(\theta ^{\beta })$, $\beta =1,\cdots ,n$. Then $\{
\theta ,e^{\beta },e^{n+\beta }\}$ is dual to $\{T,e_{\beta },e_{n+\beta }\}$%
. Now in view of (\ref{dtheta}) and (\ref{structure equs}), we have the
following real version of structure equations: 
\begin{equation*}
\left \{ 
\begin{split}
& d\theta =2\tsum_{\beta }e^{\beta }\wedge e^{n+\beta }, \\
& \nabla e_{\gamma }=\omega _{\gamma }^{\; \text{\ }\beta }\otimes e_{\beta
}+\tilde{\omega}_{\gamma }^{\; \text{\ }\beta }\otimes e_{n+\beta },\text{ \ 
}\nabla e_{n+\gamma }=\omega _{\gamma }^{\; \text{\ }\beta }\otimes
e_{n+\beta }-\tilde{\omega}_{\gamma }^{\; \text{\ }\beta }\otimes e_{\beta },
\\
& de^{\gamma }=e^{\beta }\wedge \omega _{\beta }^{\; \text{\ }\gamma
}-e^{n+\beta }\wedge \tilde{\omega}_{\beta }^{\; \text{\ }\gamma }\text{ 
\textrm{mod} }\theta ;\text{ }de^{n+\gamma }=e^{\beta }\wedge \tilde{\omega}%
_{\beta }^{\text{ \ }\gamma }+e^{n+\beta }\wedge \omega _{\beta }^{\text{ \ }%
\gamma }\text{ \textrm{mod} }\theta .
\end{split}%
\right.
\end{equation*}

Let $\Sigma $ be a surface contained in $M$. The singular set $S_{\Sigma }$
consists of those points where $\xi $ coincides with the tangent bundle $%
T\Sigma $ of $\Sigma $. It is easy to see that $S_{\Sigma }$ is a closed
set. On $\xi ,$ we can associate a natural metric $\langle $ $,$ $\rangle
_{G}=\frac{1}{2}d\theta (\cdot ,J\cdot )$ call the Levi metric. For a vector 
$v\in \xi ,$ we define the length of $v$ by $\left \vert v\right \vert
_{G}^{2}=\langle v,v\rangle _{G}.$ With respect to the Levi metric, we can
take unit vector fields $e_{1},\cdots ,e_{2n-1}\in \xi \cap T\Sigma $ on $%
\Sigma \backslash S_{\Sigma }$, called the characteristic fields and $%
e_{2n}=Je_{n}$, called the Legendrian normal. The $p$(pseudohermitian)-mean
curvature $H_{p.h}$ on $\Sigma \backslash S_{\Sigma }$ is defined by 
\begin{equation*}
\begin{array}{c}
H_{p.h}=-\sum_{j=1}^{2n-1}\left \langle \nabla _{e_{j}}e_{2n},e_{j}\right
\rangle .%
\end{array}%
\end{equation*}%
For $e_{1},\cdots ,e_{2n-1}$ being characteristic fields, we have the $p$%
-area element 
\begin{equation*}
d\Sigma _{p}=\theta \wedge e^{1}\wedge e^{n+1}\wedge \cdots \wedge
e^{n-1}\wedge e^{2n-1}\wedge e^{n}
\end{equation*}%
on $\Sigma $ and all surface integrals over $\Sigma $ are with respect to
this $2n$-form $d\Sigma _{p}$. Note that $d\Sigma _{p}$ continuously extends
over the singular set $S_{\Sigma }$ and vanishes on $S_{\Sigma }$.

We also write $\varphi _{e_{j}}=e_{j}\varphi $ and $\nabla _{b}\varphi =%
\frac{1}{2}(\varphi _{e_{\beta }}e_{\beta }+\varphi _{e_{n+\beta
}}e_{n+\beta })$. Moreover, $\varphi _{e_{j}e_{k}}=e_{k}e_{j}\varphi -\nabla
_{e_{k}}e_{j}\varphi $ and $\Delta _{b}\varphi =\frac{1}{2}\tsum_{\beta
}(\varphi _{e_{\beta }e_{\beta }}+\varphi _{e_{n+\beta }e_{n+\beta }})$.
Next we define the subdivergence operator $div_{b}(\cdot )$ by $%
div_{b}(W)=W^{\beta },_{\beta }+W^{\overline{\beta }},_{\overline{\beta }}$
for all vector fields $W=W^{\beta }Z_{\beta }+W^{\overline{\beta }}Z_{%
\overline{\beta }}$ and its real version is $div_{b}(W)=\varphi _{\beta
,e_{\beta }}+\psi _{n+\beta ,e_{n+\beta }}$ for $W=\varphi _{\beta }e_{\beta
}+\psi _{n+\beta }e_{n+\beta }$. We define the tangential subgradient $%
\nabla _{b}^{t}$ of a function $\varphi $ by $\nabla _{b}^{t}\varphi =\nabla
_{b}\varphi -\langle \nabla _{b}\varphi ,e_{2n}\rangle _{G}e_{2n}$ and the
tangent sublaplacian $\Delta _{b}^{t}$ of $\varphi $ by $\Delta
_{b}^{t}\varphi =\frac{1}{2}\sum_{j=1}^{2n-1}[(e_{j})^{2}\varphi -(\nabla
_{e_{j}}e_{j})^{t}\varphi ],$ where $(\nabla _{e_{j}}e_{j})^{t}$ is the
tangential part of $\nabla _{e_{j}}e_{j}$.

We first recall the following CR Bochner formula.

\begin{lemma}
Let $(M,J,\theta )$ be a pseudohermitian $(2n+1)$-manifold. For a real
function $\varphi $, we have%
\begin{equation}
\begin{array}{lll}
\frac{1}{2}\Delta _{b}|\nabla _{b}\varphi |^{2} & = & |\nabla
_{b}^{2}\varphi |^{2}+\langle \nabla _{b}\varphi ,\nabla _{b}\Delta
_{b}\varphi \rangle \\ 
&  & +[2Ric-(n-2)Tor]((\nabla _{b}\varphi )_{\mathbb{C}},(\nabla _{b}\varphi
)_{\mathbb{C}})+2\langle J\nabla _{b}\varphi ,\nabla _{b}\varphi _{0}\rangle
,%
\end{array}
\label{Bochnerformula}
\end{equation}%
where $(\nabla _{b}\varphi )_{\mathbb{C}}=\varphi ^{\beta }Z_{\beta }$ is
the corresponding complex $(1,0)$-vector field of $\nabla _{b}\varphi $.
\end{lemma}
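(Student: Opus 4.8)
The plan is to establish the CR Bochner formula (\ref{Bochnerformula}) by a direct tensorial computation in a local unitary frame $\{Z_\beta\}$ with $h_{\beta\overline\gamma}=\delta_{\beta\gamma}$, starting from the identity $\tfrac12|\nabla_b\varphi|^2=\varphi_\beta\varphi^\beta$ and applying the sublaplacian in the form $\Delta_b f=f_\gamma{}^\gamma+f_{\overline\gamma}{}^{\overline\gamma}$. First I would expand $\Delta_b(\varphi_\beta\varphi^\beta)$ by the Leibniz rule. The terms in which both derivatives land on the same factor produce, after collecting with their conjugates, exactly the Hessian norm $|\nabla_b^2\varphi|^2=2(\varphi_{\beta\gamma}\varphi^{\beta\gamma}+\varphi_{\beta\overline\gamma}\varphi^{\beta\overline\gamma})$, which is the first term on the right-hand side. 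What remains are the mixed terms of the schematic shape $\varphi_{\beta\gamma}{}^{\gamma}\varphi^{\beta}$ and $\varphi_{\beta\overline\gamma}{}^{\overline\gamma}\varphi^{\beta}$ together with their conjugates, i.e.\ a third-order derivative of $\varphi$ contracted against $\nabla_b\varphi$; these are the source of all the remaining terms.

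The heart of the argument is to reorganize these third-order terms using the Ricci (commutation) identities of the Tanaka--Webster connection. I would invoke: the symmetry $\varphi_{\beta\gamma}=\varphi_{\gamma\beta}$; the contact commutator $\varphi_{\beta\overline\gamma}-\varphi_{\overline\gamma\beta}=ih_{\beta\overline\gamma}\varphi_0$ coming from $[Z_\beta,Z_{\overline\gamma}]$; the third-order curvature commutation that swaps the order of the last two derivatives at the cost of a Webster--Ricci term $R_{\beta\overline\gamma}\varphi^\gamma$; and the $[T,Z_\beta]$ commutator, which introduces the torsion $A_{\beta\gamma}$ whenever a $T$-derivative is moved past a horizontal one. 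Applying these, the pair $\varphi_{\beta\gamma}{}^{\gamma}+\varphi_{\beta\overline\gamma}{}^{\overline\gamma}$ is rewritten as $(\Delta_b\varphi)_\beta$ plus curvature and torsion corrections, so that the leading contracted piece assembles into $\langle\nabla_b\varphi,\nabla_b\Delta_b\varphi\rangle$.

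Collecting the correction terms then yields the curvature and torsion contributions. The Webster--Ricci corrections, once paired with their conjugates, combine to $2\,Ric((\nabla_b\varphi)_{\mathbb{C}},(\nabla_b\varphi)_{\mathbb{C}})=2R_{\gamma\overline\beta}\varphi^\gamma\varphi^{\overline\beta}$, and the torsion corrections combine to $-(n-2)\,Tor((\nabla_b\varphi)_{\mathbb{C}},(\nabla_b\varphi)_{\mathbb{C}})$. Finally, the $\varphi_0$ terms generated by the contact commutator, after one further differentiation and after recognizing the factor $i$ as the action of $J$ on $T_{1,0}$, assemble precisely into $2\langle J\nabla_b\varphi,\nabla_b\varphi_0\rangle$; this is where the transverse direction $T$ enters and accounts for the term with no Riemannian analogue.

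I expect the \emph{main obstacle} to be the careful bookkeeping of the torsion terms needed to pin down the coefficient $-(n-2)$. The torsion $A_{\beta\gamma}$ enters through two independent routes --- the third-order curvature commutation and the $[T,Z_\beta]$ commutator that feeds back through the $\varphi_0$ terms --- and these must be combined with the correct signs and with the trace over the repeated index $\gamma$ (which produces the explicit dependence on $n$) before the cancellations become visible. Keeping the reality condition on $\varphi$ in play, so that $(0,1)$ contractions are genuine conjugates of the $(1,0)$ ones, is what allows the separate pieces to pair up into the stated $Ric$, $Tor$, and $J$-torsion expressions.
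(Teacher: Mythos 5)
Your plan is correct in substance: it is the standard direct derivation of the CR Bochner formula, expanding $\Delta_b(\varphi_\beta\varphi^\beta)$ in a unitary frame and reorganizing the third-order terms via the Tanaka--Webster commutation identities. The paper, however, does not carry out this computation; its ``proof'' consists of citing Greenleaf's Bochner formula (Lemma 3 of \cite{gr}) and then invoking the single commutation identity
\begin{equation*}
i\tsum_{\beta }(\varphi _{\beta }\varphi _{\overline{\beta }0}-\varphi _{
\overline{\beta }}\varphi _{\beta 0})=i\tsum_{\beta }(\varphi _{\beta
}\varphi _{0\overline{\beta }}-\varphi _{\overline{\beta }}\varphi _{0\beta
})-Tor((\nabla _{b}\varphi )_{\mathbb{C}},(\nabla _{b}\varphi )_{\mathbb{C}})
\end{equation*}
from \cite{cc1} to rewrite the transverse term as $2\langle J\nabla_b\varphi,\nabla_b\varphi_0\rangle$. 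So you are re-deriving from scratch what the paper outsources to a reference. The one place your proposal remains a sketch is exactly the point you flag as the main obstacle: pinning down the coefficient $-(n-2)$ of $Tor$. That coefficient is decided precisely by the order of the indices $0$ and $\overline\beta$ in the transverse term --- Greenleaf's raw formula carries $\varphi_{\overline\beta 0}$, whereas $\nabla_b\varphi_0$ carries $\varphi_{0\overline\beta}$, and swapping them via $[T,Z_{\overline\beta}]$ produces the extra torsion that shifts the coefficient to $-(n-2)$. This is the displayed identity above, so your anticipated ``bookkeeping'' is a genuine, isolable step rather than a diffuse difficulty; if you make that commutation explicit, your argument closes and reproduces the paper's statement. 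Your approach buys self-containedness at the cost of length; the paper's buys brevity at the cost of sending the reader to two external references for all the actual content.
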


The proof of the above formula follows from the Bochner formula (Lemma 3 in 
\cite{gr}) derived by A. Greenleaf and using the commutation relation (see
Lemma 2.2 in \cite{cc1}) 
\begin{equation*}
\begin{array}{c}
i\tsum_{\beta }(\varphi _{\beta }\varphi _{\overline{\beta }0}-\varphi _{%
\overline{\beta }}\varphi _{\beta 0})=i\tsum_{\beta }(\varphi _{\beta
}\varphi _{0\overline{\beta }}-\varphi _{\overline{\beta }}\varphi _{0\beta
})-Tor((\nabla _{b}\varphi )_{\mathbb{C}},(\nabla _{b}\varphi )_{\mathbb{C}%
}).%
\end{array}%
\end{equation*}%
From \cite{cc1}, we can relate $\langle J\nabla _{b}\varphi ,\nabla
_{b}\varphi _{0}\rangle $ with $\langle \nabla _{b}\varphi ,\nabla
_{b}\Delta _{b}\varphi \rangle $ by%
\begin{equation}
\begin{array}{c}
\langle J\nabla _{b}\varphi ,\nabla _{b}\varphi _{0}\rangle =\frac{1}{n}%
\langle \nabla _{b}\varphi ,\nabla _{b}\Delta _{b}\varphi \rangle
-2Tor((\nabla _{b}\varphi )_{\mathbb{C}},(\nabla _{b}\varphi )_{\mathbb{C}})-%
\frac{2}{n}\langle P\varphi +\overline{P}\varphi ,d_{b}\varphi \rangle .%
\end{array}
\label{relation formula}
\end{equation}

For the proof of Reilly's formula, we first need a series of formulae. In
particular, one derives the following CR version of divergence theorem and
Green's identity for a compact pseudohermitian $(2n+1)$-manifold $M$ with
boundary $\Sigma $. Note that $d\Sigma _{p}$ vanishes on $S_{\Sigma }$.

\begin{lemma}
\label{DivergenceTheorem} (Divergence Theorem) Let $(M,J,\theta )$ be a
compact pseudohermitian $(2n+1)$-manifold with boundary $\Sigma .$ For a
real function $\varphi $, we have 
\begin{equation}
\begin{array}{c}
\int_{M}\Delta _{b}\varphi d\mu =\int_{M}\QTR{up}{div}_{b}(\nabla
_{b}\varphi )d\mu =\frac{1}{2}C_{n}\int_{\Sigma }\varphi _{e_{2n}}d\Sigma
_{p}=C_{n}\int_{\Sigma }\langle \nabla _{b}\varphi ,e_{2n}\rangle
_{G}d\Sigma _{p},%
\end{array}
\label{C}
\end{equation}%
\begin{equation}
\begin{array}{c}
\int_{M}\varphi \varphi _{00}d\mu +\int_{M}\varphi _{0}^{2}d\mu
=-C_{n}\int_{\Sigma }\alpha \varphi \varphi _{0}d\Sigma _{p}.%
\end{array}
\label{C1}
\end{equation}%
Here $d\Sigma _{p}=\theta \wedge e^{1}\wedge e^{n+1}\wedge \cdots \wedge
e^{n-1}\wedge e^{2n-1}\wedge e^{n}$ is the $p$-area element of $\Sigma $ and 
$C_{n}=2^{n}n!$.
\end{lemma}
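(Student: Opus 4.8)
The plan is to establish the two identities by rewriting the relevant divergences as genuine exterior derivatives and applying Stokes' theorem, keeping careful track of how the $p$-area element $d\Sigma_p$ arises as the restriction to $\Sigma$ of the appropriate interior product of the volume form $d\mu = \theta\wedge(d\theta)^n$. For the first identity \eqref{C}, I would begin from the definition $\Delta_b\varphi = \varphi_\beta{}^\beta + \varphi_{\overline\beta}{}^{\overline\beta} = \operatorname{div}_b(\nabla_b\varphi)$, so the leftmost equality is immediate. The substantive step is to show that the $(2n+1)$-form $(\operatorname{div}_b W)\,d\mu$ equals $d(\iota_W\,d\mu)$ up to terms that integrate to zero, for the contact vector field $W = \nabla_b\varphi$; this is the standard identification of the subdivergence with the ordinary divergence against $d\mu$, valid because $\nabla T = 0$ and $T$ contributes no component to $\nabla_b\varphi$. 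Applying Stokes then produces $\int_M \operatorname{div}_b(\nabla_b\varphi)\,d\mu = \int_\Sigma \iota_{\nabla_b\varphi}\,d\mu$, and the task reduces to computing the pullback of $\iota_{\nabla_b\varphi}\,d\mu$ to $\Sigma$.

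The heart of the argument, and the step I expect to be the main obstacle, is the boundary computation: showing that $\iota_{\nabla_b\varphi}\,d\mu\big|_\Sigma = \tfrac12 C_n\,\varphi_{e_{2n}}\,d\Sigma_p$. Here I would expand $\nabla_b\varphi = \tfrac12(\varphi_{e_\beta}e_\beta + \varphi_{e_{n+\beta}}e_{n+\beta})$ in the real orthonormal-type frame and contract against $d\mu$. Because $\Sigma\setminus S_\Sigma$ is spanned by $e_1,\dots,e_{2n-1}$ together with the combination $\alpha e_{2n}+T$, only the component of $\nabla_b\varphi$ along the Legendrian normal $e_{2n}$ survives when the resulting $2n$-form is restricted to $T\Sigma$: the tangential components $e_1,\dots,e_{2n-1}$ get annihilated against the factors of $d\Sigma_p$ that already contain their dual coframe elements. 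The constant $\tfrac12 C_n = 2^{n-1}n!$ will emerge from the combinatorics of expanding $(d\theta)^n = (2\sum_\beta e^\beta\wedge e^{n+\beta})^n$ and matching it against the specified ordering of factors in $d\Sigma_p$. The final equality $\tfrac12 C_n\varphi_{e_{2n}} = C_n\langle\nabla_b\varphi, e_{2n}\rangle_G$ follows since $\langle\nabla_b\varphi, e_{2n}\rangle_G = \tfrac12\varphi_{e_{2n}}$ by the factor of $\tfrac12$ in the definition of the Levi metric and of $\nabla_b\varphi$; one must simply verify this normalization is consistent.

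For the second identity \eqref{C1}, I would integrate by parts in the $T$-direction. Writing $\int_M \varphi\varphi_{00}\,d\mu = \int_M \varphi\, T(T\varphi)\,d\mu$ and using that $T$ is the characteristic field with $\mathcal{L}_T\theta = 0$, I would rewrite $\varphi\varphi_{00} + \varphi_0^2 = T(\varphi\varphi_0) = \operatorname{div}(\varphi\varphi_0\,T)$ modulo the fact that $T$ is divergence-free with respect to $d\mu$ (which follows from $\mathcal{L}_T(d\theta)=0$ and $\mathcal{L}_T\theta=0$, hence $\mathcal{L}_T\,d\mu = 0$). Applying Stokes to $d(\iota_{\varphi\varphi_0 T}\,d\mu)$ yields a boundary integral $\int_\Sigma \iota_{\varphi\varphi_0 T}\,d\mu\big|_\Sigma$, and the sign and the factor $-C_n\alpha$ arise because $T$ is not itself tangent to $\Sigma$: its tangential part relative to the splitting determined by $\alpha e_{2n}+T \in T\Sigma$ is $-\alpha e_{2n}$ plus tangential pieces, so contracting $T$ against $d\mu$ and restricting to $\Sigma$ picks up exactly the factor $-\alpha$ multiplying $C_n\,d\Sigma_p$. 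The delicate point throughout is that both computations are a priori valid only on $\Sigma\setminus S_\Sigma$, but since $d\Sigma_p$ extends continuously over and vanishes on the singular set $S_\Sigma$, the boundary integrals are well-defined over all of $\Sigma$ and no extra contribution from $S_\Sigma$ appears.
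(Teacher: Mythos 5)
Your proposal is correct and follows essentially the same route as the paper: both identities are obtained by writing the integrand as $d(\iota_W d\mu)$ for $W=\nabla_b\varphi$ (resp.\ $W=\varphi\varphi_0 T$), applying Stokes, and restricting to $\Sigma$, where only the $e_{2n}$-component survives and the identity $e^{2n}\wedge e^{n}=\alpha\,\theta\wedge e^{n}$ on $\Sigma\setminus S_\Sigma$ (equivalently your decomposition $T=(\alpha e_{2n}+T)-\alpha e_{2n}$) produces the factor $-\alpha$, with $d\Sigma_p$ vanishing on $S_\Sigma$ handling the singular set. The only cosmetic difference is that the paper writes out the exact $2n$-forms explicitly rather than invoking the interior-product formalism, and the identification of $\operatorname{div}_b$ with the divergence against $d\mu$ holds exactly (not merely up to terms integrating to zero), since the connection preserves the volume form.
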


\begin{proof}
By the Stoke's theorem, we have%
\begin{equation*}
\begin{array}{lll}
\int_{M}\Delta _{b}\varphi d\mu & = & \frac{1}{2}\int_{M}\tsum_{\beta
}(\varphi _{e_{\beta }e_{\beta }}+\varphi _{e_{n+\beta }e_{n+\beta
}})2^{n}n!\theta \wedge e^{1}\wedge e^{n+1}\wedge \cdots \wedge e^{n}\wedge
e^{2n} \\ 
& = & 2^{n-1}n!\int_{M}\tsum_{\beta }d[-\varphi _{e_{\beta }}\theta \wedge
e^{1}\wedge e^{n+1}\wedge \cdots \wedge \widehat{e^{\beta }}\wedge
e^{n+\beta }\wedge \cdots \wedge e^{n}\wedge e^{2n} \\ 
&  & \text{ \  \  \  \  \  \  \  \  \  \  \  \  \  \  \  \  \  \ }+\varphi _{e_{n+\beta
}}\theta \wedge e^{1}\wedge e^{n+1}\wedge \cdots \wedge e^{\beta }\wedge 
\widehat{e^{n+\beta }}\wedge \cdots \wedge e^{n}\wedge e^{2n}] \\ 
& = & 2^{n-1}n!\int_{\Sigma }\varphi _{e_{2n}}\theta \wedge e^{1}\wedge
e^{n+1}\wedge \cdots \wedge e^{n-1}\wedge e^{2n-1}\wedge e^{n} \\ 
& = & C_{n}\int_{\Sigma }\langle \nabla _{b}\varphi ,e_{2n}\rangle
_{G}d\Sigma _{p}.%
\end{array}%
\end{equation*}%
Here we used $d\mu =\theta \wedge (d\theta )^{n}=C_{n}\theta \wedge
e^{1}\wedge e^{n+1}\wedge \cdots \wedge e^{n}\wedge e^{2n}$ and the fact
that the $2n$-forms $\theta \wedge e^{1}\wedge e^{n+1}\wedge \cdots \wedge 
\widehat{e^{\beta }}\wedge e^{n+\beta }\wedge \cdots \wedge e^{n}\wedge
e^{2n}$ vanish on $S_{\Sigma }$ for $\beta =1,\cdots ,n$ and so are $\theta
\wedge e^{1}\wedge e^{n+1}\wedge \cdots \wedge e^{\beta }\wedge \widehat{%
e^{n+\beta }}\wedge \cdots \wedge e^{n}\wedge e^{2n}$ for $\beta =1,\cdots
,n-1$, since $e_{j}$ are tangent along $\Sigma $ for $j=1,\cdots ,2n-1$.

The second equation follows easily from Stoke's theorem as above 
\begin{equation*}
\begin{array}{lll}
\int_{M}\varphi \varphi _{00}d\mu +\int_{M}\varphi _{0}^{2}d\mu & = & 
C_{n}\int_{M}d(\varphi \varphi _{0}e^{1}\wedge e^{n+1}\wedge \cdots \wedge
e^{n}\wedge e^{2n}) \\ 
& = & C_{n}\int_{\Sigma }\varphi \varphi _{0}e^{1}\wedge e^{n+1}\wedge
\cdots \wedge e^{n}\wedge e^{2n}%
\end{array}%
\end{equation*}%
and the help of the identity $e^{2n}\wedge e^{n}=\alpha \theta \wedge e^{n}$
on $\Sigma \backslash S_{\Sigma }.$
\end{proof}

\begin{corollary}
\label{Green's identity} (Green's identity) Let $(M,J,\theta )$ be a compact
pseudohermitian $(2n+1)$-manifold with boundary $\Sigma .$ For real
functions $\varphi $ and $\psi $, 
\begin{equation}
\begin{array}{c}
\int_{M}\psi \Delta _{b}\varphi d\mu +\int_{M}\langle \nabla _{b}\varphi
,\nabla _{b}\psi \rangle d\mu =\frac{1}{2}C_{n}\int_{\Sigma }\psi \varphi
_{e_{2n}}d\Sigma _{p}.%
\end{array}
\label{B}
\end{equation}
\end{corollary}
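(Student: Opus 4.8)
The plan is to deduce Green's identity directly from the Divergence Theorem of \lemref{DivergenceTheorem}, applied to the horizontal vector field $W=\psi\nabla_b\varphi$ rather than to $\nabla_b\varphi$ alone. First I would record the Leibniz rule for the subdivergence operator. Writing $\nabla_b\varphi=\varphi^{\beta}Z_{\beta}+\varphi^{\overline{\beta}}Z_{\overline{\beta}}$, one has $\psi\nabla_b\varphi=(\psi\varphi^{\beta})Z_{\beta}+(\psi\varphi^{\overline{\beta}})Z_{\overline{\beta}}$, so that
\begin{equation*}
\mathrm{div}_b(\psi\nabla_b\varphi)=(\psi\varphi^{\beta})_{,\beta}+(\psi\varphi^{\overline{\beta}})_{,\overline{\beta}}=\psi\big(\varphi^{\beta}{}_{,\beta}+\varphi^{\overline{\beta}}{}_{,\overline{\beta}}\big)+\big(\psi_{\beta}\varphi^{\beta}+\psi_{\overline{\beta}}\varphi^{\overline{\beta}}\big).
\end{equation*}
Since $\varphi^{\beta}{}_{,\beta}+\varphi^{\overline{\beta}}{}_{,\overline{\beta}}=\Delta_b\varphi$ and, for real functions, $\psi_{\beta}\varphi^{\beta}+\psi_{\overline{\beta}}\varphi^{\overline{\beta}}=\langle\nabla_b\psi,\nabla_b\varphi\rangle$ (this is simply the polarization of $|\nabla_b\varphi|^2=2\sum_{\beta}\varphi_{\beta}\varphi^{\beta}$, using $h_{\beta\overline{\gamma}}=\delta_{\beta\gamma}$), the pointwise identity collapses to $\mathrm{div}_b(\psi\nabla_b\varphi)=\psi\,\Delta_b\varphi+\langle\nabla_b\psi,\nabla_b\varphi\rangle$.

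Next I would integrate this over $M$ and invoke the Divergence Theorem in the form $\int_M\mathrm{div}_b(W)\,d\mu=C_n\int_{\Sigma}\langle W,e_{2n}\rangle_G\,d\Sigma_p$, applied to $W=\psi\nabla_b\varphi$. The left side then produces exactly $\int_M\psi\,\Delta_b\varphi\,d\mu+\int_M\langle\nabla_b\psi,\nabla_b\varphi\rangle\,d\mu$, while the boundary term is $C_n\int_{\Sigma}\psi\,\langle\nabla_b\varphi,e_{2n}\rangle_G\,d\Sigma_p$. Finally I would use the identification $\langle\nabla_b\varphi,e_{2n}\rangle_G=\tfrac12\varphi_{e_{2n}}$, which is precisely the equality already extracted from (\ref{C}) between its last two right-hand expressions, to rewrite the boundary integral as $\tfrac12 C_n\int_{\Sigma}\psi\,\varphi_{e_{2n}}\,d\Sigma_p$. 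This yields (\ref{B}).

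The one point requiring care is that \lemref{DivergenceTheorem} is phrased for the gradient field $\nabla_b\varphi$, whereas here it is applied to the more general horizontal field $\psi\nabla_b\varphi$. I expect this to be the only genuine obstacle, and it is resolved by observing that the Stokes-theorem computation proving (\ref{C}) never uses that the field is a gradient: it relies solely on $d\mu=C_n\,\theta\wedge e^{1}\wedge e^{n+1}\wedge\cdots\wedge e^{n}\wedge e^{2n}$ together with the fact that $e_1,\dots,e_{2n-1}$ are tangent to $\Sigma$, so that every boundary $2n$-form carrying a deleted $\widehat{e^{\beta}}$ or $\widehat{e^{n+\beta}}$ with $\beta\le n-1$ restricts to zero on $\Sigma$ and only the $e_{2n}$-component survives. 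Hence the identical argument establishes $\int_M\mathrm{div}_b(W)\,d\mu=C_n\int_{\Sigma}\langle W,e_{2n}\rangle_G\,d\Sigma_p$ for an arbitrary smooth horizontal field $W$, and the singular set $S_{\Sigma}$ needs no separate treatment since $d\Sigma_p$ already vanishes there. Granting this general divergence identity, Green's identity is a one-line consequence of the Leibniz rule above.
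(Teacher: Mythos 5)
Your proposal is correct and follows essentially the same route as the paper: the paper's proof likewise observes the pointwise Leibniz identity $\mathrm{div}_b(\psi\nabla_b\varphi)=\psi\Delta_b\varphi+\langle\nabla_b\varphi,\nabla_b\psi\rangle$ and then applies the CR divergence theorem to the field $\psi\nabla_b\varphi$. Your explicit remark that the Stokes computation in Lemma~\ref{DivergenceTheorem} extends verbatim to arbitrary horizontal fields is a point the paper leaves implicit, but it is the same argument.
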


\begin{proof}
It is easy to check that $\QTR{up}{div}_{b}(\psi \nabla _{b}\varphi )=\psi
\Delta _{b}\varphi +\langle \nabla _{b}\varphi ,\nabla _{b}\psi \rangle $
and then the result follows from the CR version of divergence theorem.
\end{proof}

\begin{lemma}
\label{Jvarphi0} Let $(M,J,\theta )$ be a compact pseudohermitian $(2n+1)$%
-manifold with boundary $\Sigma .$ For any real smooth function $\varphi $, 
\begin{equation}
\begin{array}{c}
\int_{M}\langle J\nabla _{b}\varphi ,\nabla _{b}\varphi _{0}\rangle d\mu
+n\int_{M}\varphi _{0}^{2}d\mu =\frac{1}{2}C_{n}\int_{\Sigma }\varphi
_{0}\varphi _{e_{n}}d\Sigma _{p}.%
\end{array}
\label{eqJvarphi0}
\end{equation}
\end{lemma}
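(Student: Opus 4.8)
The plan is to realize the density $\langle J\nabla_{b}\varphi,\nabla_{b}\varphi_{0}\rangle+n\varphi_{0}^{2}$ as a subdivergence and then to integrate by means of the divergence theorem. First I would introduce the real vector field $W=\varphi_{0}\,J\nabla_{b}\varphi\in\xi$ and expand $\operatorname{div}_{b}W$ by the Leibniz rule,
\[
\operatorname{div}_{b}(\varphi_{0}\,J\nabla_{b}\varphi)=\langle\nabla_{b}\varphi_{0},J\nabla_{b}\varphi\rangle+\varphi_{0}\operatorname{div}_{b}(J\nabla_{b}\varphi).
\]
Writing $J\nabla_{b}\varphi=i\varphi^{\beta}Z_{\beta}-i\varphi^{\overline{\beta}}Z_{\overline{\beta}}$, the first term is precisely $\langle J\nabla_{b}\varphi,\nabla_{b}\varphi_{0}\rangle$ by the symmetry of the Levi-induced inner product, so the entire statement reduces to the scalar identity $\operatorname{div}_{b}(J\nabla_{b}\varphi)=n\varphi_{0}$, which carries the genuinely CR content with no Riemannian analogue.

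To prove this identity I would compute, using $\nabla h=0$,
\[
\operatorname{div}_{b}(J\nabla_{b}\varphi)=i\varphi^{\beta}{}_{,\beta}-i\varphi^{\overline{\beta}}{}_{,\overline{\beta}}=i\bigl(\varphi_{\overline{\beta}}{}^{\overline{\beta}}-\varphi_{\beta}{}^{\beta}\bigr),
\]
since $\varphi^{\beta}{}_{,\beta}=\varphi_{\overline{\beta}}{}^{\overline{\beta}}$ and $\varphi^{\overline{\beta}}{}_{,\overline{\beta}}=\varphi_{\beta}{}^{\beta}$. Now I invoke the two facts recorded earlier: the identity $\square_{b}\varphi=(-\Delta_{b}+inT)\varphi=-2\varphi_{\overline{\beta}}{}^{\overline{\beta}}$ from the introduction and $\Delta_{b}\varphi=\varphi_{\beta}{}^{\beta}+\varphi_{\overline{\beta}}{}^{\overline{\beta}}$ from the definition of the sublaplacian. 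Together they give $\varphi_{\beta}{}^{\beta}-\varphi_{\overline{\beta}}{}^{\overline{\beta}}=in\varphi_{0}$, whence $\operatorname{div}_{b}(J\nabla_{b}\varphi)=-i(in\varphi_{0})=n\varphi_{0}$. Substituting back, I obtain the pointwise identity
\[
\operatorname{div}_{b}(\varphi_{0}\,J\nabla_{b}\varphi)=\langle J\nabla_{b}\varphi,\nabla_{b}\varphi_{0}\rangle+n\varphi_{0}^{2}.
\]

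Integrating over $M$ and applying the divergence theorem in the form $\int_{M}\operatorname{div}_{b}W\,d\mu=C_{n}\int_{\Sigma}\langle W,e_{2n}\rangle_{G}\,d\Sigma_{p}$ — valid for an arbitrary section $W$ of $\xi$, as it follows verbatim from the Stokes' theorem argument proving \lemref{DivergenceTheorem} — leaves me with the boundary integral $C_{n}\int_{\Sigma}\varphi_{0}\langle J\nabla_{b}\varphi,e_{2n}\rangle_{G}\,d\Sigma_{p}$. To evaluate it I use $e_{2n}=Je_{n}$ and the fact that $J$ is an isometry of the Levi metric, so that $\langle J\nabla_{b}\varphi,e_{2n}\rangle_{G}=\langle J\nabla_{b}\varphi,Je_{n}\rangle_{G}=\langle\nabla_{b}\varphi,e_{n}\rangle_{G}=\tfrac{1}{2}\varphi_{e_{n}}$, the last step coming from $\nabla_{b}\varphi=\tfrac{1}{2}(\varphi_{e_{\beta}}e_{\beta}+\varphi_{e_{n+\beta}}e_{n+\beta})$ with $\{e_{\beta},e_{n+\beta}\}$ orthonormal for $\langle\cdot,\cdot\rangle_{G}$. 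The boundary term thus collapses to $\tfrac{1}{2}C_{n}\int_{\Sigma}\varphi_{0}\varphi_{e_{n}}\,d\Sigma_{p}$, which is exactly \eqref{eqJvarphi0}.

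The step I expect to be the main obstacle is bookkeeping the index and inner-product conventions so that all constants land correctly: one must check that $\varphi^{\beta}$ corresponds to $\varphi_{\overline{\beta}}$ under the Levi metric, that the Leibniz term $\varphi_{0,\beta}(J\nabla_{b}\varphi)^{\beta}+\varphi_{0,\overline{\beta}}(J\nabla_{b}\varphi)^{\overline{\beta}}$ reproduces $\langle J\nabla_{b}\varphi,\nabla_{b}\varphi_{0}\rangle$ on the nose and not a multiple of it, and that the divergence theorem is applied with the Levi metric $\langle\cdot,\cdot\rangle_{G}$, so that the factor $\tfrac{1}{2}$ in $\langle\nabla_{b}\varphi,e_{n}\rangle_{G}=\tfrac{1}{2}\varphi_{e_{n}}$ meshes with $C_{n}=2^{n}n!$ to give precisely the constant $\tfrac{1}{2}C_{n}$. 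Once the CR identity $\operatorname{div}_{b}(J\nabla_{b}\varphi)=n\varphi_{0}$ is secured, the rest is routine.
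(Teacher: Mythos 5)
Your proposal is correct and follows essentially the same route as the paper: the paper's proof likewise writes $\langle J\nabla_{b}\varphi,\nabla_{b}\varphi_{0}\rangle+n\varphi_{0}^{2}=\operatorname{div}_{b}\bigl((J\nabla_{b}\varphi)\varphi_{0}\bigr)$ and applies the divergence theorem \eqref{C}, evaluating the boundary term via $\langle J\nabla_{b}\varphi,e_{2n}\rangle_{G}=\tfrac{1}{2}\varphi_{e_{n}}$. The only difference is that you supply the verification of $\operatorname{div}_{b}(J\nabla_{b}\varphi)=n\varphi_{0}$ from the commutation relation, which the paper asserts without proof.
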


\begin{proof}
Since $\QTR{up}{div}_{b}(\left( J\nabla _{b}\varphi \right) \varphi
_{0})=\langle J\nabla _{b}\varphi ,\nabla _{b}\varphi _{0}\rangle +n\varphi
_{0}^{2}$ and by the divergence theorem (\ref{C}), we have%
\begin{equation*}
\begin{array}{ll}
& \int_{M}\langle J\nabla _{b}\varphi ,\nabla _{b}\varphi _{0}\rangle d\mu
+n\int_{M}\varphi _{0}^{2}d\mu \\ 
= & \int_{M}\QTR{up}{div}_{b}(\left( J\nabla _{b}\varphi \right) \varphi
_{0})d\mu =C_{n}\int_{\Sigma }\langle \left( J\nabla _{b}\varphi \right)
\varphi _{0},e_{2n}\rangle _{G}d\Sigma _{p}=\frac{1}{2}C_{n}\int_{\Sigma
}\varphi _{0}\varphi _{e_{n}}d\Sigma _{p}.%
\end{array}%
\end{equation*}
\end{proof}

\begin{lemma}
\label{P&Paneitz} Let $(M,J,\theta )$ be a compact pseudohermitian $(2n+1)$%
-manifold with boundary $\Sigma .$ For any real smooth function $\varphi $, 
\begin{equation}
\begin{array}{c}
\int_{M}\langle P\varphi +\overline{P}\varphi ,d_{b}\varphi \rangle d\mu +%
\frac{1}{4}\int_{M}(P_{0}\varphi )\varphi d\mu =\frac{1}{2}%
iC_{n}\int_{\Sigma }\varphi \left( P_{n}\varphi -P_{\overline{n}}\varphi
\right) d\Sigma _{p}.%
\end{array}
\label{eqP&Paneitz}
\end{equation}
\end{lemma}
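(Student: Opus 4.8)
The plan is to obtain \eqref{eqP&Paneitz} by integrating by parts the two exact-divergence pieces built into the Paneitz operator, arranging the bulk integrals so that they reproduce $\langle P\varphi +\overline{P}\varphi ,d_{b}\varphi \rangle $ and identifying the resulting boundary integrals. Starting from the definition $P_{0}\varphi =4\delta _{b}(P\varphi )+4\overline{\delta }_{b}(\overline{P}\varphi )$ in \eqref{Paneitz}, I would write $\frac{1}{4}\int_{M}\varphi P_{0}\varphi \,d\mu =\int_{M}\varphi \,\delta _{b}(P\varphi )\,d\mu +\int_{M}\varphi \,\overline{\delta }_{b}(\overline{P}\varphi )\,d\mu $. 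Reading off $\delta _{b}(\sigma _{\beta }\theta ^{\beta })=\sum_{\beta }\sigma _{\beta ,\overline{\beta }}$ and $\overline{\delta }_{b}(\sigma _{\overline{\beta }}\theta ^{\overline{\beta }})=\sum_{\beta }\sigma _{\overline{\beta },\beta }$, the Leibniz rule gives $\delta _{b}(\varphi P\varphi )=\sum_{\beta }\varphi _{\overline{\beta }}P_{\beta }\varphi +\varphi \,\delta _{b}(P\varphi )$ and $\overline{\delta }_{b}(\varphi \overline{P}\varphi )=\sum_{\beta }\varphi _{\beta }\overline{P}_{\beta }\varphi +\varphi \,\overline{\delta }_{b}(\overline{P}\varphi )$. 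Substituting isolates the bulk term $-\int_{M}\sum_{\beta }(\varphi _{\overline{\beta }}P_{\beta }\varphi +\varphi _{\beta }\overline{P}_{\beta }\varphi )\,d\mu $ plus the integrals of the two exact divergences $\delta _{b}(\varphi P\varphi )$ and $\overline{\delta }_{b}(\varphi \overline{P}\varphi )$.

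Next I would identify the bulk term with the pairing on the left-hand side. Since $h_{\beta \overline{\gamma }}=\delta _{\beta \gamma }$ forces $\langle \theta ^{\beta },\theta ^{\overline{\gamma }}\rangle =\delta _{\beta \gamma }$ and $\langle \theta ^{\beta },\theta ^{\gamma }\rangle =0$, one gets $\langle P\varphi ,d_{b}\varphi \rangle =\sum_{\beta }\varphi _{\overline{\beta }}P_{\beta }\varphi $ and $\langle \overline{P}\varphi ,d_{b}\varphi \rangle =\sum_{\beta }\varphi _{\beta }\overline{P}_{\beta }\varphi $, so the bulk term is exactly $-\int_{M}\langle P\varphi +\overline{P}\varphi ,d_{b}\varphi \rangle \,d\mu $. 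Moving it to the other side cancels the first integral in \eqref{eqP&Paneitz}, so it only remains to evaluate the two exact-divergence integrals as boundary integrals.

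For that I would use \lemref{DivergenceTheorem} in the form $\int_{M}\mathrm{div}_{b}(W)\,d\mu =C_{n}\int_{\Sigma }\langle W,e_{2n}\rangle _{G}\,d\Sigma _{p}$, which holds for any (possibly complex) $W$ by the same Stokes computation as in its proof. The $(1,0)$-form $\varphi P\varphi =\sum_{\beta }\varphi P_{\beta }\varphi \,\theta ^{\beta }$ is metric-dual to $W=\sum_{\beta }\varphi P_{\beta }\varphi \,Z_{\overline{\beta }}$, for which $\delta _{b}(\varphi P\varphi )=\mathrm{div}_{b}(W)$, and likewise $\overline{\delta }_{b}(\varphi \overline{P}\varphi )=\mathrm{div}_{b}(\sum_{\beta }\varphi \overline{P}_{\beta }\varphi \,Z_{\beta })$. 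The only remaining computation is the pairing with the Legendrian normal: because $e_{2n}=Je_{n}=e_{n+n}$ and $Z_{\overline{\beta }}=\frac{1}{2}(e_{\beta }+ie_{n+\beta })$, extending $\langle \,,\,\rangle _{G}$ complex-bilinearly yields $\langle Z_{\overline{\beta }},e_{2n}\rangle _{G}=\frac{i}{2}\delta _{\beta n}$ and $\langle Z_{\beta },e_{2n}\rangle _{G}=-\frac{i}{2}\delta _{\beta n}$, so only the $\beta =n$ term survives. This produces $\int_{M}\delta _{b}(\varphi P\varphi )\,d\mu =\frac{i}{2}C_{n}\int_{\Sigma }\varphi P_{n}\varphi \,d\Sigma _{p}$ and $\int_{M}\overline{\delta }_{b}(\varphi \overline{P}\varphi )\,d\mu =-\frac{i}{2}C_{n}\int_{\Sigma }\varphi P_{\overline{n}}\varphi \,d\Sigma _{p}$, whose sum is the right-hand side of \eqref{eqP&Paneitz}.

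The main difficulty here is bookkeeping rather than conceptual. One must pin the normalizations so that the dual-coframe inner product reproduces $\langle P\varphi +\overline{P}\varphi ,d_{b}\varphi \rangle $ with no stray constant, and so that the single pairing $\langle Z_{\overline{\beta }},e_{2n}\rangle _{G}=\frac{i}{2}\delta _{\beta n}$ is correct; it is precisely the vanishing of all components but $\beta =n$ (forced by $e_{2n}=Je_{n}$) together with the factor $\frac{i}{2}$ coming from $Z_{\overline{\beta }}=\frac{1}{2}(e_{\beta }+ie_{n+\beta })$ that manufactures the constant $\frac{1}{2}iC_{n}$ and the antisymmetric combination $P_{n}\varphi -P_{\overline{n}}\varphi $. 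Once these are fixed, the cancellation of the bulk integrals and the emergence of the stated boundary term follow immediately.
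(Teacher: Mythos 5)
Your proposal is correct and follows essentially the same route as the paper: both express $\langle P\varphi +\overline{P}\varphi ,d_{b}\varphi \rangle +\frac{1}{4}\varphi P_{0}\varphi$ as an exact subdivergence (the paper packages your two conjugate pieces into the single real field $(\varphi P^{\beta }\varphi )Z_{\beta }+(\varphi P^{\overline{\beta }}\varphi )Z_{\overline{\beta }}$), then apply the divergence theorem of Lemma~\ref{DivergenceTheorem} and evaluate the pairing with $e_{2n}$, where only the $\beta =n$ component survives with the factor $\frac{i}{2}$. Your normalizations $\langle Z_{\overline{\beta }},e_{2n}\rangle _{G}=\frac{i}{2}\delta _{\beta n}$ and the resulting constant $\frac{1}{2}iC_{n}$ agree with the paper's computation.
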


\begin{proof}
It can be easily checked that 
\begin{equation*}
\begin{array}{c}
\QTR{up}{div}_{b}\left( (\varphi P^{\beta }\varphi )Z_{\beta }+(\varphi P^{%
\overline{\beta }}\varphi )Z_{\overline{\beta }}\right) =\langle P\varphi +%
\overline{P}\varphi ,d_{b}\varphi \rangle +\frac{1}{4}\varphi P_{0}\varphi .%
\end{array}%
\end{equation*}%
We then have by the divergence theorem (\ref{C}) 
\begin{equation*}
\begin{array}{ll}
& \int_{M}\langle P\varphi +\overline{P}\varphi ,d_{b}\varphi \rangle d\mu +%
\frac{1}{4}\int_{M}(P_{0}\varphi )\varphi d\mu \\ 
= & C_{n}\int_{\Sigma }\left \langle (\varphi P^{\beta }\varphi )Z_{\beta
}+(\varphi P^{\overline{\beta }}\varphi )Z_{\overline{\beta }},e_{2n}\right
\rangle _{G}d\Sigma _{p}=\frac{1}{2}iC_{n}\int_{\Sigma }\varphi \left(
P_{n}\varphi -P_{\overline{n}}\varphi \right) d\Sigma _{p}.%
\end{array}%
\end{equation*}
\end{proof}

\begin{lemma}
Let $(M,J,\theta )$ be a compact pseudohermitian $(2n+1)$-manifold with
boundary $\Sigma $. For real-valued functions $\varphi $ on $\Sigma ,$%
\begin{equation}
\begin{array}{c}
\int_{\Sigma }\left( \varphi _{e_{n}}+2\alpha \varphi \right) d\Sigma _{p}=0;%
\end{array}
\label{2014a}
\end{equation}%
\begin{equation}
\begin{array}{c}
\int_{\Sigma }[\varphi _{\overline{\beta }}+(\tsum_{\gamma \neq n}\theta _{%
\overline{\beta }}^{\; \text{\ }\overline{\gamma }}(Z_{\overline{\gamma }})+%
\frac{1}{2}\theta _{\overline{\beta }}^{\; \text{\ }\overline{n}%
}(e_{n}))\varphi ]d\Sigma _{p}=0\text{ for any }\beta \neq n;%
\end{array}
\label{2014b}
\end{equation}%
\begin{equation}
\begin{array}{c}
\int_{\Sigma }[\varphi _{0}+\alpha \varphi _{e_{2n}}-(\alpha \tilde{\omega}%
_{n}^{\; \text{\ }n}(e_{n})-\func{Re}A_{\overline{n}\overline{n}})\varphi
]d\Sigma _{p}=0.%
\end{array}
\label{2014c}
\end{equation}
\end{lemma}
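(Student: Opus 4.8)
The plan is to prove all three identities by a single mechanism: Stokes' theorem on the \emph{closed} hypersurface $\Sigma=\partial M$. Since $\Sigma$ bounds the compact manifold $M$, it is itself a closed ($2n$-dimensional) manifold, so $\int_\Sigma d\omega=0$ for every smooth $(2n-1)$-form $\omega$ on $\Sigma$. This is the essential difference from Lemma~\ref{DivergenceTheorem} and Corollary~\ref{Green's identity}, where Stokes was applied on $M$ and produced the boundary terms over $\Sigma$; here the ambient object is $\Sigma$ itself, which has no boundary. For each identity I would take $\omega=\varphi\,\eta$, where $\eta$ is the $p$-area element $d\Sigma_p$ with exactly one of its coframe factors deleted, expand $d\omega=d\varphi\wedge\eta+\varphi\,d\eta$, restrict to $\Sigma$, and use throughout the relation $e^{2n}=\alpha\theta$ on $\Sigma\setminus S_{\Sigma}$ recorded in the proof of Lemma~\ref{DivergenceTheorem}. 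As there, all the forms involved extend continuously across and vanish on the singular set $S_\Sigma$, so Stokes applies even though the frame and $\alpha$ degenerate on $S_\Sigma$.

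Concretely, for (\ref{2014a}) I would take $\eta=\theta\wedge e^1\wedge e^{n+1}\wedge\cdots\wedge e^{n-1}\wedge e^{2n-1}$, i.e.\ $d\Sigma_p$ with $e^n$ removed. Writing $d\varphi=\sum_j\varphi_{e_j}e^j+\varphi_0\theta$, the wedge $d\varphi\wedge\eta$ keeps only the $e^n$ slot, since the $e^{2n}$ slot dies because $e^{2n}\wedge\theta=\alpha\theta\wedge\theta=0$ on $\Sigma$, producing $\pm\varphi_{e_n}\,d\Sigma_p$; and in $\varphi\,d\eta$ the term $\varphi\,d\theta\wedge e^1\wedge\cdots\wedge e^{2n-1}$ picks out, via $d\theta=2\sum_\beta e^\beta\wedge e^{n+\beta}$, only the $\beta=n$ summand $2e^n\wedge e^{2n}=2\alpha\,e^n\wedge\theta$, giving $\pm2\alpha\varphi\,d\Sigma_p$. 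For (\ref{2014c}) I would instead delete $\theta$, taking $\eta=e^1\wedge e^{n+1}\wedge\cdots\wedge e^{n-1}\wedge e^{2n-1}\wedge e^n$; now $d\varphi\wedge\eta$ contributes both the $\varphi_0\theta$ term and the $\varphi_{e_{2n}}e^{2n}=\alpha\varphi_{e_{2n}}\theta$ term, yielding $\varphi_0+\alpha\varphi_{e_{2n}}$, while $\varphi\,d\eta$ produces the remaining coefficient through the complex structure equation $d\theta^\beta=\theta^\gamma\wedge\theta_\gamma{}^\beta+\theta\wedge\tau^\beta$ with $\tau_\beta=A_{\beta\gamma}\theta^\gamma$; the torsion term $\tau^n$ is exactly the source of the $\operatorname{Re}A_{\overline n\overline n}$, and $\alpha\tilde\omega_n{}^n(e_n)$ comes from the connection part. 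For (\ref{2014b}) I would pass to the complex coframe, $d\mu=C_n(\tfrac i2)^n\theta\wedge\theta^1\wedge\theta^{\overline1}\wedge\cdots\wedge\theta^n\wedge\theta^{\overline n}$, and take $\eta$ to be this product with $\theta^{\overline\beta}$ deleted; then $d\varphi$ supplies $\varphi_{\overline\beta}$ and the connection $1$-forms $\theta_{\overline\beta}{}^{\overline\gamma}$ coming from $d\theta^{\overline\gamma}$ supply the coefficient $\sum_{\gamma\ne n}\theta_{\overline\beta}{}^{\overline\gamma}(Z_{\overline\gamma})+\tfrac12\theta_{\overline\beta}{}^{\overline n}(e_n)$.

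The special role of the index $n$ in (\ref{2014b}) and (\ref{2014c}) — the restriction of the sum to $\gamma\ne n$, and the appearance of the real derivative $e_n$ together with the factor $\tfrac12$ — is forced by the same constraint $e^{2n}=\alpha\theta$ on $\Sigma$. Since $e^{2n}=\operatorname{Im}\theta^n=\tfrac1{2i}(\theta^n-\theta^{\overline n})$, this constraint reads $\theta^n-\theta^{\overline n}=2i\alpha\theta$ on $\Sigma$, so that on $\Sigma$ the pair $\theta^n,\theta^{\overline n}$ collapses modulo $\theta$ to the single independent real direction $e^n=\tfrac12(\theta^n+\theta^{\overline n})$. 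This is precisely what converts the would-be $\theta^{\overline n}$-contribution into the $\tfrac12\,\theta_{\overline\beta}{}^{\overline n}(e_n)$ term and removes $\gamma=n$ from the sum.

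I expect the genuine obstacle to be not the Stokes applications themselves but the bookkeeping of the connection and torsion $1$-forms in the $\varphi\,d\eta$ pieces: one must expand each $de^j$ (resp.\ each $d\theta^\gamma$) by the structure equations (\ref{structure equs}), retain only those connection components that supply the one missing coframe slot, and then use the skew-Hermitian symmetry $\theta_\beta{}^\gamma+\theta_{\overline\gamma}{}^{\overline\beta}=0$ to collapse the resulting sum into exactly the stated coefficients — and, for (\ref{2014a}), to see the connection contribution cancel entirely, leaving only $\varphi_{e_n}+2\alpha\varphi$. Tracking the signs from the reorderings needed to return each term to the orientation of $d\Sigma_p$, while simultaneously imposing $e^{2n}=\alpha\theta$, is where the computation is most error-prone.
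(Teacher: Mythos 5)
Your mechanism is exactly the paper's: each identity is obtained by applying Stokes' theorem on the closed hypersurface $\Sigma =\partial M$ to $\varphi$ times a $(2n-1)$-form that restricts on $\Sigma$ to $d\Sigma _{p}$ with one coframe slot removed, combined with the restriction identity $e^{2n}=\alpha \theta$ on $\Sigma \backslash S_{\Sigma }$ and the structure equations (\ref{structure equs}). One slip to fix: for (\ref{2014b}) the form you must puncture is $d\Sigma _{p}\propto \theta \wedge \theta ^{1}\wedge \theta ^{\overline{1}}\wedge \cdots \wedge \theta ^{n-1}\wedge \theta ^{\overline{n-1}}\wedge e^{n}$ (last slot $e^{n}$), not the volume coframe $\theta \wedge \theta ^{1}\wedge \theta ^{\overline{1}}\wedge \cdots \wedge \theta ^{n}\wedge \theta ^{\overline{n}}$; deleting $\theta ^{\overline{\beta }}$ from the latter leaves a $2n$-form whose restriction to $\Sigma$ is already identically zero, because $\theta \wedge \theta ^{n}\wedge \theta ^{\overline{n}}=-2i\, \theta \wedge e^{n}\wedge e^{2n}=-2i\alpha \, \theta \wedge e^{n}\wedge \theta =0$ there, so Stokes would return only $0=0$. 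With $e^{n}$ in the last slot the computation proceeds exactly as you describe and matches the paper. The only other difference is cosmetic but worth adopting: the paper writes the punctured forms as $\theta \wedge (d\theta )^{n-1}$, $(d\theta )^{n-1}\wedge e^{n}$ and $\theta \wedge \theta ^{\beta }\wedge (d\theta )^{n-2}\wedge e^{n}$, which agree with your explicit frame products on $\Sigma$ up to the constant $2^{n-1}(n-1)!$ but are far easier to differentiate since $(d\theta )^{n-1}$ is closed, so almost no connection forms appear; and because exterior differentiation commutes with restriction to $\Sigma$ and the two choices of punctured form have equal restrictions, this also shows a priori that the connection contributions you flag as the error-prone part of (\ref{2014a}) really do cancel.
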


\begin{proof}
By the Stoke's theorem, we have 
\begin{equation*}
\begin{array}{lll}
\frac{1}{2}C_{n}\int_{\Sigma }\varphi _{e_{n}}d\Sigma _{p} & = & 
\int_{\Sigma }\varphi _{e_{n}}\theta \wedge \left( d\theta \right)
^{n-1}\wedge e^{n} \\ 
& = & -\int_{\Sigma }d\varphi \wedge \theta \wedge \left( d\theta \right)
^{n-1}+\int_{\Sigma }\varphi _{e_{2n}}e^{2n}\wedge \theta \wedge \left(
d\theta \right) ^{n-1} \\ 
& = & -\int_{\Sigma }d(\varphi \theta \wedge \left( d\theta \right)
^{n-1})+\int_{\Sigma }\varphi d\theta \wedge \left( d\theta \right) ^{n-1}
\\ 
& = & \int_{\Sigma }2\varphi e^{n}\wedge e^{2n}\wedge \left( d\theta \right)
^{n-1}=-\int_{\Sigma }2\alpha \varphi \theta \wedge e^{n}\wedge \left(
d\theta \right) ^{n-1} \\ 
& = & -C_{n}\int_{\Sigma }\alpha \varphi d\Sigma _{p},%
\end{array}%
\end{equation*}%
where we used the identities $\theta \wedge \left( d\theta \right)
^{n-1}\wedge e^{2n}=0$ on $\Sigma $ since $e_{n}$ is tangent along $\Sigma ,$
$d\theta =2\tsum_{\beta =1}^{n}e^{\beta }\wedge e^{n+\beta }$ and $%
e^{2n}\wedge e^{n}=\alpha \theta \wedge e^{n}$ on $\Sigma \backslash
S_{\Sigma }.$

For the second equation, we compute 
\begin{equation*}
\begin{array}{ll}
& \int_{\Sigma }\varphi _{\overline{\beta }}\theta \wedge \left( d\theta
\right) ^{n-1}\wedge e^{n}=\int_{\Sigma }\varphi _{\overline{\beta }}\theta
\wedge \theta ^{\beta }\wedge \theta ^{\overline{\beta }}\wedge
(\tsum_{j=1}^{n-1}\underset{j\neq \beta }{\wedge }\theta ^{j}\wedge \theta ^{%
\overline{j}})\wedge e^{n} \\ 
= & \int_{\Sigma }d\varphi \wedge \theta \wedge \theta ^{\beta }\wedge
(\tsum_{j=1}^{n-1}\underset{j\neq \beta }{\wedge }\theta ^{j}\wedge \theta ^{%
\overline{j}})\wedge e^{n}=-\int_{\Sigma }\varphi d[\theta \wedge \theta
^{\beta }\wedge (\left( d\theta \right) ^{n-2})\wedge e^{n}] \\ 
= & \int_{\Sigma }\varphi \lbrack \theta \wedge d\theta ^{\beta }\wedge
(\left( d\theta \right) ^{n-2})\wedge e^{n}]-\int_{\Sigma }\varphi \lbrack
\theta \wedge \theta ^{\beta }\wedge (\left( d\theta \right) ^{n-2})\wedge
de^{n}] \\ 
= & \int_{\Sigma }\varphi \lbrack \theta \wedge (\theta ^{\gamma }\wedge
\theta _{\gamma }{}^{\beta }+\theta \wedge \tau ^{\beta })\wedge
(\tsum_{j=1}^{n-1}\underset{j\neq \gamma }{\wedge }\theta ^{j}\wedge \theta
^{\overline{j}})\wedge e^{n}] \\ 
& -\int_{\Sigma }\frac{1}{2}\varphi \lbrack \theta \wedge \theta ^{\beta
}\wedge (\tsum_{j=1}^{n-1}\underset{j\neq \beta }{\wedge }\theta ^{j}\wedge
\theta ^{\overline{j}})\wedge (\tsum_{\gamma \neq n}\theta _{\overline{%
\gamma }}{}^{\overline{n}}(e_{n})\theta ^{\overline{\gamma }})\wedge e^{n}]
\\ 
= & \int_{\Sigma }\left( \tsum_{\gamma \neq n}\theta _{\gamma }{}^{\beta
}(Z_{\overline{\gamma }})-\frac{1}{2}\theta _{\overline{\beta }}^{\;%
\overline{n}}(e_{n})\right) \varphi \theta \wedge \theta ^{\beta }\wedge
\theta ^{\overline{\beta }}\wedge (\tsum_{j=1}^{n-1}\underset{j\neq \beta }{%
\wedge }\theta ^{j}\wedge \theta ^{\overline{j}})\wedge e^{n} \\ 
= & -\int_{\Sigma }\left( \tsum_{\gamma \neq n}\theta _{\overline{\beta }%
}^{\; \overline{\gamma }}(Z_{\overline{\gamma }})+\frac{1}{2}\theta _{%
\overline{\beta }}^{\; \overline{n}}(e_{n})\right) \varphi \theta \wedge
\left( d\theta \right) ^{n-1}\wedge e^{n},%
\end{array}%
\end{equation*}%
where we used $de^{n}=\frac{1}{2}(\theta ^{\gamma }\wedge \theta _{\gamma
}{}^{n}+\theta ^{\overline{\gamma }}\wedge \theta _{\overline{\gamma }}{}^{%
\overline{n}})=\frac{1}{2}\tsum_{\gamma \neq n}\theta _{\overline{\gamma }%
}{}^{\overline{n}}(e_{n})\theta ^{\overline{\gamma }}\wedge e^{n}$ $\mathrm{%
mod}$ $\theta ,$ $e^{2n}$\ on $\Sigma .$

The same compute for the third equation yields%
\begin{equation*}
\begin{array}{ll}
& \int_{\Sigma }\varphi _{0}\theta \wedge \left( d\theta \right)
^{n-1}\wedge e^{n} \\ 
= & \int_{\Sigma }d\varphi \wedge \left( d\theta \right) ^{n-1}\wedge
e^{n}-\int_{\Sigma }\varphi _{e_{2n}}e^{2n}\wedge e^{n}\wedge \left( d\theta
\right) ^{n-1} \\ 
= & \int_{\Sigma }d(\varphi \left( d\theta \right) ^{n-1}\wedge
e^{n})-\int_{\Sigma }\varphi \left( d\theta \right) ^{n-1}\wedge
de^{n}-\int_{\Sigma }\alpha \varphi _{e_{2n}}\theta \wedge \left( d\theta
\right) ^{n-1}\wedge e^{n} \\ 
= & \int_{\Sigma }\varphi \left( d\theta \right) ^{n-1}\wedge \lbrack \tilde{%
\omega}_{n}^{\;n}(e_{n})e^{2n}\wedge e^{n}-\func{Re}A_{\overline{n}\overline{%
n}}\theta \wedge e^{n}]-\int_{\Sigma }\alpha \varphi _{e_{2n}}\theta \wedge
\left( d\theta \right) ^{n-1}\wedge e^{n} \\ 
= & \int_{\Sigma }[(\alpha \tilde{\omega}_{n}^{\;n}(e_{n})-\func{Re}A_{%
\overline{n}\overline{n}})\varphi -\alpha \varphi _{e_{2n}}]\theta \wedge
\left( d\theta \right) ^{n-1}\wedge e^{n}.%
\end{array}%
\end{equation*}
\end{proof}

\begin{lemma}
Let $(M,J,\theta )$ be a compact pseudohermitian $(2n+1)$-manifold with
boundary $\Sigma $. For real-valued functions $\varphi $ and $\psi $ on $%
\Sigma ,$ we have%
\begin{equation}
\begin{array}{c}
\int_{\Sigma }\psi (\Delta _{b}^{t}+\alpha e_{n})\varphi d\Sigma
_{p}=\int_{\Sigma }\varphi (\Delta _{b}^{t}+\alpha e_{n})\psi d\Sigma _{p}.%
\end{array}
\label{2014d}
\end{equation}
\end{lemma}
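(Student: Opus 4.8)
The goal is to prove that the operator $L_\alpha = \Delta_b^t + \alpha e_n$ is self-adjoint on $\Sigma$ with respect to the $p$-area element $d\Sigma_p$, i.e.\ that
\[
\int_\Sigma \psi(\Delta_b^t + \alpha e_n)\varphi\,d\Sigma_p = \int_\Sigma \varphi(\Delta_b^t + \alpha e_n)\psi\,d\Sigma_p.
\]
Since the claimed identity is bilinear and symmetric in $\varphi$ and $\psi$, it suffices to establish a first Green-type identity and then antisymmetrize. The plan is to show that the ``wrong-way'' boundary terms produced by integration by parts on the closed hypersurface $\Sigma$ vanish, so that the difference of the two sides is zero.

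\emph{Step 1: Reduce to a divergence identity.} First I would observe that for the operator $L_\alpha$, Leibniz gives
\[
\psi L_\alpha \varphi - \varphi L_\alpha \psi = \tfrac{1}{2}\tsum_{j=1}^{2n-1}\big(\psi\,\varphi_{e_je_j}-\varphi\,\psi_{e_je_j}\big) - \tfrac{1}{2}\tsum_{j=1}^{2n-1}\big(\psi(\nabla_{e_j}e_j)^t\varphi-\varphi(\nabla_{e_j}e_j)^t\psi\big)+\alpha(\psi\varphi_{e_n}-\varphi\psi_{e_n}).
\]
The plan is to recognize the right-hand side as a tangential subdivergence of the vector field $W=\tfrac{1}{2}\tsum_{j=1}^{2n-1}(\psi\varphi_{e_j}-\varphi\psi_{e_j})\,e_j$ (plus the $\alpha e_n$ contribution), so that the whole expression equals $\operatorname{div}_b^t W$ modulo terms that cancel by the symmetry of the second fundamental form of the tangential connection. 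Concretely, I would verify that
\[
\psi L_\alpha\varphi-\varphi L_\alpha\psi = \operatorname{div}^t_b\!\big(\tfrac{1}{2}\tsum_{j}(\psi\varphi_{e_j}-\varphi\psi_{e_j})e_j\big),
\]
using that the first-order derivatives $\psi_{e_j}\varphi_{e_j}$ are symmetric and hence cancel in the difference.

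\emph{Step 2: Integrate the divergence against $d\Sigma_p$.} Next I would integrate this identity over the closed hypersurface $\Sigma$. The key point is that $\Sigma$ is compact without boundary, and that the tangential divergence structure, together with the fact that $d\Sigma_p$ is the natural invariant $p$-area element adapted to the characteristic fields $e_1,\dots,e_{2n-1}$, makes $\int_\Sigma \operatorname{div}^t_b(W)\,d\Sigma_p=0$. This is the analogue, on $\Sigma$, of the surface divergence theorem: one writes $W\lrcorner\, d\Sigma_p$ and applies Stokes, exactly in the style of the computations establishing (\ref{2014a})--(\ref{2014c}) above. The $\alpha e_n$ term is designed precisely so that $L_\alpha$ (rather than $\Delta_b^t$ alone) is the correct self-adjoint operator: the extra boundary-type contributions coming from the variation of $d\Sigma_p$ along the $e_n$-direction are absorbed by the $\alpha e_n$ term, as in (\ref{2014a}).

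\emph{Main obstacle.} The delicate point is the behavior on the singular set $S_\Sigma$ and the verification that no uncancelled boundary term survives. Since $d\Sigma_p$ continuously extends over $S_\Sigma$ and \emph{vanishes} there, and since the vector fields $e_j$ (and hence $W$) are only defined on $\Sigma\setminus S_\Sigma$, I would argue that the integrand $\operatorname{div}^t_b(W)\,d\Sigma_p$ extends continuously across $S_\Sigma$ with vanishing contribution, so the Stokes argument is legitimate on all of $\Sigma$. The hard part will be bookkeeping the connection terms $(\nabla_{e_j}e_j)^t$ and confirming that the precise coefficient $\alpha$ in front of $e_n$ is exactly the one forced by the identity (\ref{2014a}); once that matching is checked, the antisymmetrized divergence integrates to zero and the self-adjointness (\ref{2014d}) follows.
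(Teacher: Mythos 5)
The paper states this lemma with no proof at all, so there is no argument of the authors to compare yours against; I can only assess your proposal on its own terms. Your architecture is the natural one and is consistent with how the neighbouring identities (\ref{2014a})--(\ref{2014c}) are obtained: antisymmetrize in $\varphi$ and $\psi$, recognize the result as a divergence of the tangential field $W=\tfrac12\sum_{j}(\psi\varphi_{e_j}-\varphi\psi_{e_j})e_j$, and kill the integral by Stokes on the closed hypersurface $\Sigma$, with the singular set handled by the continuous extension and vanishing of $d\Sigma_p$ on $S_\Sigma$.

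However, as written the proposal has a genuine gap: both of your key claims --- that $\psi L_\alpha\varphi-\varphi L_\alpha\psi=\mathrm{div}_b^t(W)$ and that $\int_\Sigma\mathrm{div}_b^t(W)\,d\Sigma_p=0$ --- are precisely the content of the lemma, and neither is verified. The second claim is not a black box you may invoke: $d\Sigma_p$ is not invariant under the flows of the characteristic fields, and (\ref{2014a}) itself shows $\int_\Sigma F_{e_n}\,d\Sigma_p=-2\int_\Sigma\alpha F\,d\Sigma_p$, which is nonzero in general, while for $j\neq n$ the analogous zeroth-order corrections are the connection-form terms appearing in (\ref{2014b}). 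So each $\int_\Sigma e_j(F_j)\,d\Sigma_p$ produces a correction $\int_\Sigma c_jF_j\,d\Sigma_p$, and the lemma amounts exactly to checking that $\sum_j c_jF_j$ reproduces the antisymmetrization of $\tfrac12\sum_j(\nabla_{e_j}e_j)^t+\alpha e_n$; note also that $(\nabla_{e_j}e_j)^t$ has a component along $\alpha e_{2n}+T$ which your $W$ does not capture, so the cancellation is not purely among the $e_1,\dots,e_{2n-1}$ directions. You explicitly defer this matching (``once that matching is checked''), but it is the entire proof; without carrying out that computation the argument is a plausible plan rather than a demonstration.
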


This Lemma implies that $\Delta _{b}^{t}+\alpha e_{n}$ is a self-adjoint
operator with respect to the $p$-area element $d\Sigma _{p}$ on $\Sigma .$

\textbf{The Proof of Theorem}\textup{\textbf{\  \ref{Reilly'sformula}:}}

\begin{proof}
By integrating the CR version of Bochner formula (\ref{Bochnerformula}), we
have%
\begin{equation*}
\begin{array}{lll}
\frac{1}{2}\int_{M}\Delta _{b}|\nabla _{b}\varphi |^{2}d\mu & = & 
\int_{M}|\nabla _{b}^{2}\varphi |^{2}d\mu +\int_{M}\langle \nabla
_{b}\varphi ,\nabla _{b}\Delta _{b}\varphi \rangle d\mu \\ 
&  & +\int_{M}[2Ric-(n-2)Tor]((\nabla _{b}\varphi )_{\mathbb{C}},(\nabla
_{b}\varphi )_{\mathbb{C}})d\mu \\ 
&  & +2\int_{M}\langle J\nabla _{b}\varphi ,\nabla _{b}\varphi _{0}\rangle
d\mu .%
\end{array}%
\end{equation*}%
Note that 
\begin{equation*}
\begin{array}{c}
\tsum_{\beta ,\gamma }|\varphi _{\beta \overline{\gamma }}|^{2}=\tsum_{\beta
,\gamma }|\varphi _{\beta \overline{\gamma }}-\frac{1}{n}\varphi _{\sigma
}{}^{\sigma }h_{\beta \overline{\gamma }}|^{2}+\frac{1}{4n}\left( \Delta
_{b}\varphi \right) ^{2}+\frac{n}{4}\varphi _{0}^{2}.%
\end{array}%
\end{equation*}%
It follows from the CR Green's identity (\ref{B}) with $\psi =\Delta
_{b}\varphi $ and (\ref{eqJvarphi0}), that 
\begin{equation}
\begin{array}{ll}
& \frac{1}{2}\int_{M}\Delta _{b}|\nabla _{b}\varphi |^{2}d\mu \\ 
= & 2\int_{M}\tsum_{\beta ,\gamma }|\varphi _{\beta \gamma }|^{2}d\mu
+2\int_{M}\tsum_{\gamma ,\beta }|\varphi _{\beta \overline{\gamma }}-\frac{1%
}{n}\varphi _{\sigma }{}^{\sigma }h_{\beta \overline{\gamma }}|^{2}d\mu \\ 
& -\frac{3n}{2}\int_{M}\varphi _{0}^{2}d\mu +C_{n}\int_{\Sigma }\varphi
_{0}\varphi _{e_{n}}d\Sigma _{p}+\frac{1}{2}C_{n}\int_{\Sigma }(\Delta
_{b}\varphi )\varphi _{e_{2n}}d\Sigma _{p} \\ 
& -\frac{2n-1}{2n}\int_{M}(\Delta _{b}\varphi )^{2}d\mu
+\int_{M}[2Ric-(n-2)Tor]((\nabla _{b}\varphi )_{\mathbb{C}},(\nabla
_{b}\varphi )_{\mathbb{C}}).%
\end{array}
\label{temporary05}
\end{equation}%
By combining (\ref{eqJvarphi0}), (\ref{relation formula}), (\ref{B}) and (%
\ref{eqP&Paneitz}), we have%
\begin{equation}
\begin{array}{lll}
n\int_{M}\varphi _{0}^{2}d\mu & = & \frac{1}{n}\int_{M}(\Delta _{b}\varphi
)^{2}d\mu -\frac{1}{2n}C_{n}\int_{\Sigma }(\Delta _{b}\varphi )\varphi
_{e_{2n}}d\Sigma _{p} \\ 
&  & -\frac{1}{2n}\int_{M}\varphi P_{0}\varphi d\mu +\frac{1}{n}%
iC_{n}\int_{\Sigma }\varphi \left( P_{n}\varphi -P_{\overline{n}}\varphi
\right) d\Sigma _{p} \\ 
&  & +\frac{1}{2}C_{n}\int_{\Sigma }\varphi _{0}\varphi _{e_{n}}d\Sigma
_{p}+2\int_{M}Tor\left( (\nabla _{b}\varphi )_{\mathbb{C}},(\nabla
_{b}\varphi )_{\mathbb{C}}\right) d\mu .%
\end{array}
\label{100}
\end{equation}%
Also applying the divergence Theorem to the equation%
\begin{equation*}
\begin{array}{c}
(B^{\beta \overline{\gamma }}\varphi )(B_{\beta \overline{\gamma }}\varphi
)=(\varphi ^{\beta }B_{\beta \overline{\gamma }}\varphi ),^{\overline{\gamma 
}}-\frac{n-1}{n}(\varphi P_{\beta }\varphi ),^{\beta }+\frac{n-1}{8n}\varphi
P_{0}\varphi%
\end{array}%
\end{equation*}%
with $B_{\beta \overline{\gamma }}\varphi =\varphi _{\beta \overline{\gamma }%
}-\frac{1}{n}\varphi _{\sigma }{}^{\sigma }h_{\beta \overline{\gamma }},$ we
obtain%
\begin{equation}
\begin{array}{ll}
& \int_{M}\tsum_{\beta ,\gamma }|\varphi _{\beta \overline{\gamma }}-\frac{1%
}{n}\varphi _{\sigma }{}^{\sigma }h_{\beta \overline{\gamma }}|^{2}d\mu \\ 
= & \frac{n-1}{8n}\int_{M}\varphi P_{0}\varphi d\mu -\frac{n-1}{4n}%
iC_{n}\int_{\Sigma }\varphi \left( P_{n}\varphi -P_{\overline{n}}\varphi
\right) d\Sigma _{p} \\ 
& +\frac{1}{4}iC_{n}\int_{\Sigma }(\varphi ^{\overline{\beta }}B_{n\overline{%
\beta }}\varphi -\varphi ^{\beta }B_{\overline{n}\beta }\varphi )d\Sigma
_{p}.%
\end{array}
\label{101}
\end{equation}%
Here%
\begin{equation*}
\begin{array}{ll}
& i(\varphi ^{\overline{\beta }}B_{n\overline{\beta }}\varphi -\varphi
^{\beta }B_{\overline{n}\beta }\varphi ) \\ 
= & \frac{1}{4}\tsum_{\beta \neq n}[\varphi _{e_{n+\beta }}(\varphi
_{e_{\beta }e_{n}}+\varphi _{e_{n+\beta }e_{2n}})+\varphi _{e_{\beta
}}(\varphi _{e_{\beta }e_{2n}}-\varphi _{e_{n+\beta }e_{n}})] \\ 
& +\frac{1}{4}\varphi _{e_{2n}}[(\varphi _{e_{n}e_{n}}+\varphi
_{e_{2n}e_{2n}})-\frac{2}{n}\Delta _{b}\varphi ].%
\end{array}%
\end{equation*}%
Substituting these into the right hand side of (\ref{temporary05}), we get%
\begin{equation}
\begin{array}{ll}
& \frac{1}{2}\int_{M}\Delta _{b}|\nabla _{b}\varphi |^{2}d\mu \\ 
= & 2\int_{M}\tsum_{\beta ,\gamma }|\varphi _{\beta \gamma }|^{2}d\mu -\frac{%
n+1}{n}\int_{M}(\Delta _{b}\varphi )^{2}d\mu \\ 
& +\frac{n+2}{4n}\int_{M}\varphi P_{0}\varphi d\mu -\frac{n+2}{2n}%
iC_{n}\int_{\Sigma }\varphi \left( P_{n}\varphi -P_{\overline{n}}\varphi
\right) d\Sigma _{p} \\ 
& +\int_{M}[2Ric-(n+1)Tor]((\nabla _{b}\varphi )_{\mathbb{C}},(\nabla
_{b}\varphi )_{\mathbb{C}})d\mu +\frac{1}{4}C_{n}\int_{\Sigma }\varphi
_{0}\varphi _{e_{n}}d\Sigma _{p} \\ 
& +\frac{1}{2}iC_{n}\int_{\Sigma }(\varphi ^{\overline{\beta }}B_{n\overline{%
\beta }}\varphi -\varphi ^{\beta }B_{\overline{n}\beta }\varphi )d\Sigma
_{p}+\frac{2n+3}{4n}C_{n}\int_{\Sigma }(\Delta _{b}\varphi )\varphi
_{e_{2n}}d\Sigma _{p}.%
\end{array}
\label{temporary06}
\end{equation}

On the other hand, the divergence theorem (\ref{C}) implies that%
\begin{equation*}
\begin{array}{lll}
\frac{1}{2}\int_{M}\Delta _{b}|\nabla _{b}\varphi |^{2}d\mu & = & \frac{1}{4}%
C_{n}\int_{\Sigma }\left( |\nabla _{b}\varphi |^{2}\right) _{e_{2n}}d\Sigma
_{p} \\ 
& = & \frac{1}{4}C_{n}\int_{\Sigma }\tsum_{\beta \neq n}\left( \varphi
_{e_{\beta }}\varphi _{e_{\beta }e_{2n}}+\varphi _{e_{n+\beta }}\varphi
_{e_{n+\beta }e_{2n}}\right) d\Sigma _{p} \\ 
&  & +\frac{1}{4}C_{n}\int_{\Sigma }\left( \varphi _{e_{n}}\varphi
_{e_{n}e_{2n}}+\varphi _{e_{2n}}\varphi _{e_{2n}e_{2n}}\right) d\Sigma _{p}.%
\end{array}%
\end{equation*}%
Substituting the commutation relations 
\begin{equation*}
\begin{array}{l}
\varphi _{e_{\beta }e_{n+\gamma }}=\varphi _{e_{n+\gamma }e_{\beta }},\
\varphi _{e_{n+\beta }e_{n+\gamma }}=\varphi _{e_{n+\gamma }e_{n+\beta }},\ 
\mathrm{for}\text{ }\mathrm{all}\text{ }{\small \beta \neq \gamma ,} \\ 
\text{ }\varphi _{e_{n}e_{2n}}=\varphi _{e_{2n}e_{n}}+2\varphi _{0},%
\end{array}%
\end{equation*}%
and 
\begin{equation}
\begin{array}{c}
\tsum_{\beta \neq n}2(\varphi _{\beta \overline{\beta }}+\varphi _{\overline{%
\beta }\beta })+\varphi _{e_{n}e_{n}}=\sum_{j=1}^{2n-1}\varphi
_{e_{j}e_{j}}=2\Delta _{b}^{t}\varphi +H_{p.h}\varphi _{e_{2n}} \\ 
\varphi _{e_{2n}e_{2n}}=2\Delta _{b}\varphi -\sum_{j=1}^{2n-1}\varphi
_{e_{j}e_{j}}%
\end{array}
\label{A}
\end{equation}%
into the above equation, also integrating by parts from (\ref{2014a}) and (%
\ref{2014b}) yields 
\begin{equation}
\begin{array}{ll}
& \frac{1}{2}\int_{M}\Delta _{b}|\nabla _{b}\varphi |^{2}d\mu \\ 
= & \frac{1}{4}C_{n}\int_{\Sigma }\tsum_{\beta \neq n}(\varphi _{e_{\beta
}}\varphi _{e_{2n}e_{\beta }}+\varphi _{e_{n+\beta }}\varphi
_{_{e_{2n}e_{n+\beta }}})d\Sigma _{p} \\ 
& +\frac{1}{4}C_{n}\int_{\Sigma }\varphi _{e_{n}}(\varphi
_{e_{2n}e_{n}}+2\varphi _{0})d\Sigma _{p}+\frac{1}{4}C_{n}\int_{\Sigma
}\varphi _{e_{2n}}\varphi _{e_{2n}e_{2n}}d\Sigma _{p} \\ 
= & \frac{1}{4}C_{n}\int_{\Sigma }[\tsum_{\beta \neq n}2(\varphi _{\overline{%
\beta }}\varphi _{e_{2n}Z_{\beta }}+\varphi _{\beta }\varphi _{_{e_{2n}Z_{%
\overline{\beta }}}})+\varphi _{e_{n}}\varphi _{e_{2n}e_{n}}]d\Sigma _{p} \\ 
& +\frac{1}{2}C_{n}\int_{\Sigma }\varphi _{e_{n}}\varphi _{0}d\Sigma _{p}+%
\frac{1}{4}C_{n}\int_{\Sigma }\varphi _{e_{2n}}\varphi
_{e_{2n}e_{2n}}d\Sigma _{p} \\ 
= & -\frac{1}{4}C_{n}\int_{\Sigma }\varphi _{e_{2n}}[\tsum_{\beta \neq
n}2(\varphi _{\beta \overline{\beta }}+\varphi _{\overline{\beta }\beta
})+\varphi _{e_{n}e_{n}}]d\Sigma _{p} \\ 
& -\frac{1}{2}C_{n}\int_{\Sigma }\alpha \varphi _{e_{n}}\varphi
_{e_{2n}}d\Sigma _{p}-\frac{1}{4}C_{n}\int_{\Sigma }\varphi _{e_{n}}(\nabla
_{e_{n}}e_{2n})\varphi d\Sigma _{p} \\ 
& +\frac{1}{2}C_{n}\int_{\Sigma }\varphi _{e_{n}}\varphi _{0}d\Sigma _{p}+%
\frac{1}{4}C_{n}\int_{\Sigma }\varphi _{e_{2n}}\varphi
_{e_{2n}e_{2n}}d\Sigma _{p} \\ 
& +\frac{1}{4}C_{n}\int_{\Sigma }\varphi _{e_{2n}}[\tsum_{\beta \neq
n}(\theta _{n}{}^{\beta }(e_{n})\varphi _{\beta }+\theta _{\overline{n}}{}^{%
\overline{\beta }}(e_{n})\varphi _{\overline{\beta }})-(\nabla
_{e_{n}}e_{n})^{t}\varphi ]d\Sigma _{p} \\ 
& +\frac{1}{2}C_{n}\int_{\Sigma }\tsum_{\beta ,\gamma \neq n}i(\theta _{%
\overline{n}}{}^{\overline{\gamma }}(Z_{\beta })\varphi _{\overline{\gamma }%
}-\theta _{n}{}^{\gamma }(Z_{\beta })\varphi _{\gamma })\varphi _{\overline{%
\beta }}d\Sigma _{p} \\ 
& -\frac{1}{2}C_{n}\int_{\Sigma }\tsum_{\beta ,\gamma \neq n}i(\theta
_{n}{}^{\gamma }(Z_{\overline{\beta }})\varphi _{\gamma }-\theta _{\overline{%
n}}{}^{\overline{\gamma }}(Z_{\overline{\beta }})\varphi _{\gamma })\varphi
_{\beta }d\Sigma _{p} \\ 
= & \frac{1}{2}C_{n}\int_{\Sigma }\varphi _{e_{2n}}\left( \Delta _{b}\varphi
-2\Delta _{b}^{t}\varphi \right) d\Sigma _{p}-\frac{1}{4}C_{n}\int_{\Sigma
}H_{p.h}\varphi _{e_{2n}}^{2}d\Sigma _{p} \\ 
& -\frac{1}{2}C_{n}\int_{\Sigma }\alpha \varphi _{e_{n}}\varphi
_{e_{2n}}d\Sigma _{p}+\frac{1}{2}C_{n}\int_{\Sigma }\varphi _{0}\varphi
_{e_{n}}d\Sigma _{p} \\ 
& -\frac{1}{4}C_{n}\int_{\Sigma }\tsum_{j,k=1}^{2n-1}\left \langle \nabla
_{e_{j}}e_{2n},e_{k}\right \rangle \varphi _{e_{j}}\varphi _{e_{k}}d\Sigma
_{p}.%
\end{array}
\label{temporary09}
\end{equation}%
Here we use $\varphi _{\beta \overline{\beta }}=Z_{\overline{\beta }%
}Z_{\beta }\varphi -\tsum_{\gamma \neq n}\theta _{\beta }{}^{\gamma }(Z_{%
\overline{\beta }})\varphi _{\gamma }$ for each $\beta \neq n$ and $\varphi
_{e_{n}e_{n}}=e_{n}^{2}\varphi -(\nabla _{e_{n}}e_{n})^{t}\varphi $ on $%
\Sigma ,$ the fact that (\ref{A}) holds only on $\Sigma \backslash S_{\Sigma
}.$ However, $d\Sigma _{p}$ can be continuously extends over the singular
set $S_{\Sigma }$ and vanishes on $S_{\Sigma }.$ Finally, by combining the
equations (\ref{temporary06}) and (\ref{temporary09}), we can then obtain (%
\ref{CR Reilly's formula}). This completes the proof of Theorem.
\end{proof}

\section{The CR First Non-Zero Dirichlet Eigenvalue Estimate}

In this section, we derive the first Dirichlet eigenvalue estimate in a
compact pseudohermitian $(2n+1)$-manifold $(M,J,\theta )$ with boundary $%
\Sigma $.

\begin{lemma}
Let $(M,J,\theta )$ be a compact pseudohermitian $(2n+1)$-manifold with the
smooth boundary $\Sigma $ of pseudohermitian mean curvature $H_{p.h}$\ for $%
n\geq 2$. For the first eigenfunction $\varphi $ of Dirichlet eigenvalue
problem (\ref{1b}), we have%
\begin{equation}
\begin{array}{l}
\frac{n-1}{8n}\int_{M}\varphi P_{0}\varphi d\mu =\int_{M}\tsum_{\beta
,\gamma }|\varphi _{\beta \overline{\gamma }}-\frac{1}{n}\varphi _{\sigma
}{}^{\sigma }h_{\beta \overline{\gamma }}|^{2}d\mu +\frac{1}{8}%
C_{n}\int_{\Sigma }H_{p.h}\varphi _{e_{2n}}^{2}d\Sigma _{p}%
\end{array}
\label{2aa}
\end{equation}%
which implies 
\begin{equation}
\begin{array}{c}
\int_{M}\varphi P_{0}\varphi d\mu \geq 0%
\end{array}
\label{2bb}
\end{equation}%
if $H_{p.h}$ is nonnegative.
\end{lemma}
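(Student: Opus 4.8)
The plan is to obtain the identity (\ref{2aa}) straight from equation (\ref{101}), which was derived in the course of proving the Reilly formula, by feeding in the Dirichlet data. Solving (\ref{101}) for the Paneitz term gives
\[
\frac{n-1}{8n}\int_M \varphi P_0\varphi\, d\mu = \int_M \sum_{\beta,\gamma}\Big|\varphi_{\beta\overline{\gamma}}-\tfrac{1}{n}\varphi_\sigma{}^\sigma h_{\beta\overline{\gamma}}\Big|^2 d\mu + \frac{n-1}{4n}iC_n\int_\Sigma \varphi\,(P_n\varphi-P_{\overline{n}}\varphi)\,d\Sigma_p - \frac{1}{4}iC_n\int_\Sigma(\varphi^{\overline{\beta}}B_{n\overline{\beta}}\varphi-\varphi^{\beta}B_{\overline{n}\beta}\varphi)\,d\Sigma_p .
\]
So the whole task is to show that, for the first Dirichlet eigenfunction of (\ref{1b}), the two boundary integrals collapse into exactly $\tfrac18 C_n\int_\Sigma H_{p.h}\varphi_{e_{2n}}^2\,d\Sigma_p$.

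First I would record what the boundary conditions force. Since $\varphi\equiv 0$ on $\Sigma$, every derivative of $\varphi$ along a direction tangent to $\Sigma$ vanishes there; in particular $\varphi_{e_j}=0$ for $j=1,\dots,2n-1$, so only the Legendrian-normal derivative $\varphi_{e_{2n}}$ survives. Writing $Z_\beta=\tfrac12(e_\beta-ie_{n+\beta})$, this gives $\varphi_\beta=0$ for $\beta\neq n$ and $\varphi_n=-\tfrac{i}{2}\varphi_{e_{2n}}$, hence $\varphi^{\overline{\beta}}=\varphi^{\beta}=0$ for $\beta\neq n$, while $\varphi^{\overline{n}}=-\tfrac{i}{2}\varphi_{e_{2n}}$ and $\varphi^{n}=\tfrac{i}{2}\varphi_{e_{2n}}$. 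Moreover, since $\varphi$ is an eigenfunction, $\Delta_b\varphi=-\lambda_1\varphi=0$ on $\Sigma$, and likewise the tangential sublaplacian $\Delta_b^t\varphi=0$ on $\Sigma$ because $\Delta_b^t$ depends only on $\varphi|_\Sigma\equiv 0$.

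With these facts the first boundary integral drops out (it carries the overall factor $\varphi=0$), and the second collapses to its $\beta=n$ term, yielding $-\tfrac18 C_n\int_\Sigma \varphi_{e_{2n}}\big(B_{n\overline n}\varphi+B_{\overline n n}\varphi\big)\,d\Sigma_p$. Next I would simplify $B_{n\overline n}\varphi+B_{\overline n n}\varphi=(\varphi_{n\overline n}+\varphi_{\overline n n})-\tfrac1n\Delta_b\varphi$; on $\Sigma$ the last term vanishes, and the Hermitian Hessian reduces to the real combination $\varphi_{n\overline n}+\varphi_{\overline n n}=\tfrac12(\varphi_{e_ne_n}+\varphi_{e_{2n}e_{2n}})$. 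Applying (\ref{A}) with $\Delta_b\varphi=\Delta_b^t\varphi=0$ gives $\varphi_{e_{2n}e_{2n}}=-H_{p.h}\varphi_{e_{2n}}$, so half of the target already appears; the remaining $\varphi_{e_ne_n}$ contribution must supply the other $-\tfrac12 H_{p.h}\varphi_{e_{2n}}^2$ after integration, which is arranged by integrating by parts over $\Sigma$ through the boundary identities (\ref{2014a})--(\ref{2014c}) to dispose of the connection and tangential-Hessian pieces. Tracking the pseudohermitian connection forms $\theta_\beta{}^\gamma$ in the real frame, one then gets $-\tfrac18 C_n\int_\Sigma \varphi_{e_{2n}}(B_{n\overline n}\varphi+B_{\overline n n}\varphi)\,d\Sigma_p=\tfrac18 C_n\int_\Sigma H_{p.h}\varphi_{e_{2n}}^2\,d\Sigma_p$, which is exactly (\ref{2aa}).

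I expect the main obstacle to be precisely this last matching: reconciling the Hermitian Hessian $\varphi_{n\overline n}+\varphi_{\overline n n}$ with the real second fundamental form of $\Sigma$. The naive pointwise identification produces a leftover term of the form $\langle\nabla_{e_n}e_n,e_{2n}\rangle\varphi_{e_{2n}}^2$ (coming from the characteristic direction $e_n$ paired with the normal $e_{2n}=Je_n$) that does not individually equal $H_{p.h}\varphi_{e_{2n}}^2$; it agrees only after integration over $\Sigma$ via (\ref{2014a})--(\ref{2014c}), so the bookkeeping of signs and connection coefficients must be done carefully in the real adapted frame $\{e_1,\dots,e_{2n}\}$. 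Once (\ref{2aa}) is established, the inequality (\ref{2bb}) is immediate: the bulk integral $\int_M\sum_{\beta,\gamma}|\varphi_{\beta\overline{\gamma}}-\tfrac1n\varphi_\sigma{}^\sigma h_{\beta\overline{\gamma}}|^2\,d\mu$ is a sum of squares, hence nonnegative, and when $H_{p.h}\geq 0$ the boundary integral is nonnegative as well; since $\tfrac{n-1}{8n}>0$ for $n\geq 2$, it follows that $\int_M\varphi P_0\varphi\,d\mu\geq 0$.
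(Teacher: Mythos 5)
Your overall strategy coincides with the paper's: both arguments start from (\ref{101}), discard the boundary integral $\int_{\Sigma}\varphi\left(P_{n}\varphi-P_{\overline{n}}\varphi\right)d\Sigma_{p}$ because $\varphi=0$ on $\Sigma$, and must then identify the surviving term $-\frac{i}{4}C_{n}\int_{\Sigma}(\varphi^{\overline{\beta}}B_{n\overline{\beta}}\varphi-\varphi^{\beta}B_{\overline{n}\beta}\varphi)d\Sigma_{p}$ with $\frac{1}{8}C_{n}\int_{\Sigma}H_{p.h}\varphi_{e_{2n}}^{2}d\Sigma_{p}$. Your reductions up to that point are correct and parallel to the paper's: only $\beta=n$ survives, $\varphi_{n}=-\frac{i}{2}\varphi_{e_{2n}}$, $B_{n\overline{n}}\varphi+B_{\overline{n}n}\varphi=\frac{1}{2}(\varphi_{e_{n}e_{n}}+\varphi_{e_{2n}e_{2n}})-\frac{1}{n}\Delta_{b}\varphi$, and $\varphi_{e_{2n}e_{2n}}=-H_{p.h}\varphi_{e_{2n}}$ from (\ref{A}). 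The passage from (\ref{2aa}) to (\ref{2bb}) is also fine.

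The gap is the last step, which you yourself flag as "the main obstacle" and then only assert. After your reductions what remains to be proved is exactly
\begin{equation*}
\int_{\Sigma}\varphi_{e_{2n}}\varphi_{e_{n}e_{n}}\,d\Sigma_{p}=-\int_{\Sigma}H_{p.h}\varphi_{e_{2n}}^{2}\,d\Sigma_{p},
\end{equation*}
and since all tangential derivatives of $\varphi$ vanish one has pointwise $\varphi_{e_{n}e_{n}}=-\langle\nabla_{e_{n}}e_{n},e_{2n}\rangle\varphi_{e_{2n}}$ on $\Sigma$, so this amounts to the nontrivial integral identity that $\int_{\Sigma}\sum_{j\neq n}\langle\nabla_{e_{j}}e_{j},e_{2n}\rangle\varphi_{e_{2n}}^{2}\,d\Sigma_{p}$ vanishes. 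You claim this follows "after integration by parts via (\ref{2014a})--(\ref{2014c})", but those identities control integrals of $\varphi_{e_{n}}$, $\varphi_{\overline{\beta}}$ and $\varphi_{0}$ against connection coefficients, not of $\varphi_{e_{2n}}^{2}$ against the second fundamental form of $\Sigma$, and you offer no computation; as written this is an unproven claim, not a bookkeeping detail, and it is the entire content of the boundary term in (\ref{2aa}). The paper's route is materially different at precisely this point: in (\ref{2ab}) the terms $\sum_{\beta\neq n}\varphi_{\beta}\varphi_{n\overline{\beta}}$ are integrated by parts on $\Sigma$ \emph{before} the condition $\varphi_{\beta}=0$ ($\beta\neq n$) is imposed, which produces a second copy of $\varphi_{n\overline{n}}$ (the factor $2\varphi_{n\overline{n}}-\frac{n+1}{n}\varphi_{\gamma}{}^{\gamma}$); the full $H_{p.h}$ term then comes from $\varphi_{e_{2n}e_{2n}}$ alone via the second line of (\ref{A}), while the $\varphi_{e_{n}e_{n}}$ contribution is converted by a further integration by parts into terms weighted by $\varphi_{e_{n}}$, which vanish for the Dirichlet eigenfunction. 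By setting $\varphi_{\beta}=0$ pointwise first you forfeit that mechanism, so the missing half of the $H_{p.h}$ term must be recovered by an argument you have not supplied.
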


\begin{proof}
Since $\varphi =0$ on $\Sigma $ and $e_{j}$ is tangent along $\Sigma $ for $%
1\leq j\leq 2n-1$, then $\varphi _{e_{j}}=0$ for $1\leq j\leq 2n-1$ and $%
\Delta _{b}^{t}\varphi =\frac{1}{2}\sum_{j=1}^{2n-1}[e_{j}{}^{2}\varphi
-(\nabla _{e_{j}}e_{j})^{t}\varphi ]=0$ on $\Sigma .$ Furthermore, since $%
\Delta _{b}\varphi =\lambda _{1}\varphi \ $on$\mathrm{\ }M$ and $\varphi =0$
on $\Sigma ,$ then $\Delta _{b}\varphi =0$ on $\Sigma $.

It follows from (\ref{2014a}), (\ref{2014b}) and (\ref{A}) that 
\begin{equation}
\begin{array}{ll}
& iC_{n}\int_{\Sigma }(\varphi ^{\overline{\beta }}B_{n\overline{\beta }%
}\varphi -\varphi ^{\beta }B_{\overline{n}\beta }\varphi )d\Sigma _{p} \\ 
= & iC_{n}\int_{\Sigma }\sum_{\beta \neq n}(\varphi _{\beta }\varphi _{n%
\overline{\beta }}-\varphi _{\overline{\beta }}\varphi _{\overline{n}\beta
})d\Sigma _{p}+C_{n}\int_{\Sigma }(\varphi _{n}B_{n\overline{n}}\varphi
-\varphi _{\overline{n}}B_{\overline{n}n}\varphi )d\Sigma _{p} \\ 
= & iC_{n}\int_{\Sigma }\varphi _{n}[B_{n\overline{n}}\varphi -\sum_{\beta
\neq n}(\varphi _{\beta \overline{\beta }}-\frac{1}{2}\theta _{n}{}^{\beta
}(e_{n})\varphi _{\beta })]d\Sigma _{p}+\text{ }\mathrm{conjugate} \\ 
= & iC_{n}\int_{\Sigma }\varphi _{n}[2\varphi _{n\overline{n}}-\frac{n+1}{n}%
\varphi _{\gamma }{}^{\gamma }+\frac{1}{2}\sum_{\beta \neq n}\theta
_{n}{}^{\beta }(e_{n})\varphi _{\beta }]d\Sigma _{p}+\text{ }\mathrm{%
conjugate} \\ 
= & \frac{1}{2}C_{n}\int_{\Sigma }\varphi _{e_{2n}}[\varphi
_{e_{n}e_{n}}+(\nabla _{e_{n}}{}^{e_{n}})^{t}\varphi +\varphi
_{e_{2n}e_{2n}}-\frac{n+1}{n}\Delta _{b}\varphi ]d\Sigma _{p} \\ 
& +\frac{n-1}{2}C_{n}\int_{\Sigma }\varphi _{0}\varphi _{e_{n}}d\Sigma _{p}+%
\frac{1}{2}C_{n}\int_{\Sigma }\varphi _{e_{n}}[(\nabla
_{e_{n}}{}^{e_{2n}})^{t}\varphi +\tilde{\omega}_{n}^{\;n}(e_{n})\varphi
_{e_{n}}]d\Sigma _{p} \\ 
= & \frac{1}{2}C_{n}\int_{\Sigma }\varphi _{e_{2n}}(\frac{n-1}{n}\Delta
_{b}\varphi -2\Delta _{b}^{t}\varphi -H_{p.h}\varphi _{e_{2n}})d\Sigma _{p}+%
\frac{n-1}{2}C_{n}\int_{\Sigma }\varphi _{0}\varphi _{e_{n}}d\Sigma _{p} \\ 
& -\frac{1}{2}C_{n}\int_{\Sigma }\varphi _{e_{n}}\varphi
_{e_{2n}e_{n}}d\Sigma _{p}+\frac{1}{2}C_{n}\int_{\Sigma }\tilde{\omega}%
_{n}^{\;n}(e_{n})\varphi _{e_{n}}^{2}d\Sigma _{p}-C_{n}\int_{\Sigma }\alpha
\varphi _{e_{n}}\varphi _{e_{2n}}d\Sigma _{p},%
\end{array}
\label{2ab}
\end{equation}%
where we used $B_{n\overline{\beta }}\varphi =\varphi _{n\overline{\beta }}$
for $\beta \neq n,$ $B_{n\overline{n}}\varphi =\varphi _{n\overline{n}}-%
\frac{1}{n}\varphi _{\gamma }{}^{\gamma }$ and 
\begin{equation*}
\begin{array}{cl}
& \int_{\Sigma }[\varphi _{e_{2n}}(\varphi _{e_{n}e_{n}}+(\nabla
_{e_{n}}{}^{e_{n}})^{t}\varphi )+\varphi _{e_{n}}(\nabla
_{e_{n}}{}^{e_{2n}})^{t}\varphi ]d\Sigma _{p} \\ 
= & \int_{\Sigma }[\varphi _{e_{2n}}(e_{n})^{2}\varphi +\varphi
_{e_{n}}(\nabla _{e_{n}}{}^{e_{2n}})^{t}\varphi ]d\Sigma _{p} \\ 
= & -\int_{\Sigma }[\varphi _{e_{n}}(e_{n}e_{2n}\varphi -(\nabla
_{e_{n}}{}^{e_{2n}})^{t}\varphi )+2\alpha \varphi _{e_{n}}\varphi
_{e_{2n}}]d\Sigma _{p} \\ 
= & -\int_{\Sigma }(\varphi _{e_{n}}\varphi _{e_{2n}e_{n}}+2\alpha \varphi
_{e_{n}}\varphi _{e_{2n}})d\Sigma _{p}.%
\end{array}%
\end{equation*}%
Substituting (\ref{2ab}) into (\ref{101}), we get%
\begin{equation*}
\begin{array}{lll}
\frac{n-1}{8n}\int_{M}\varphi P_{0}\varphi d\mu & = & \int_{M}\tsum_{\beta
,\gamma }|\varphi _{\beta \overline{\gamma }}-\frac{1}{n}\varphi _{\sigma
}{}^{\sigma }h_{\beta \overline{\gamma }}|^{2}d\mu +\frac{n-1}{4n}%
iC_{n}\int_{\Sigma }\varphi \left( P_{n}\varphi -P_{\overline{n}}\varphi
\right) d\Sigma _{p} \\ 
&  & -\frac{1}{4}iC_{n}\int_{\Sigma }(\varphi ^{\overline{\beta }}B_{n%
\overline{\beta }}\varphi -\varphi ^{\beta }B_{\overline{n}\beta }\varphi
)d\Sigma _{p} \\ 
& = & \int_{M}\tsum_{\beta ,\gamma }|\varphi _{\beta \overline{\gamma }}-%
\frac{1}{n}\varphi _{\sigma }{}^{\sigma }h_{\beta \overline{\gamma }%
}|^{2}d\mu +\frac{n-1}{4n}iC_{n}\int_{\Sigma }\varphi \left( P_{n}\varphi
-P_{\overline{n}}\varphi \right) d\Sigma _{p} \\ 
&  & -\frac{1}{8}C_{n}\int_{\Sigma }\varphi _{e_{2n}}(\frac{n-1}{n}\Delta
_{b}\varphi -2\Delta _{b}^{t}\varphi -H_{p.h}\varphi _{e_{2n}})d\Sigma _{p}-%
\frac{n-1}{8}C_{n}\int_{\Sigma }\varphi _{0}\varphi _{e_{n}}d\Sigma _{p} \\ 
&  & +\frac{1}{8}C_{n}\int_{\Sigma }\varphi _{e_{n}}\varphi
_{e_{2n}e_{n}}d\Sigma _{p}-\frac{1}{8}C_{n}\int_{\Sigma }\tilde{\omega}%
_{n}^{\;n}(e_{n})\varphi _{e_{n}}^{2}d\Sigma _{p}+\frac{1}{4}%
C_{n}\int_{\Sigma }\alpha \varphi _{e_{n}}\varphi _{e_{2n}}d\Sigma _{p}%
\end{array}%
\end{equation*}%
which is the equation (\ref{2aa}) under the assumptions.
\end{proof}

Now we are ready to prove Theorem \ref{TB}.

\textbf{The Proof of Theorem \ref{TB}}:

\begin{proof}
It follows the CR Reilly formula (\ref{CR Reilly's formula}) that%
\begin{equation}
\begin{array}{c}
\frac{n+1}{n}\int_{M}(\Delta _{b}\varphi )^{2}d\mu \geq \frac{n+2}{4n}%
\int_{M}\varphi P_{0}\varphi d\mu +\int_{M}[2Ric-(n+1)Tor]((\nabla
_{b}\varphi )_{\mathbb{C}},(\nabla _{b}\varphi )_{\mathbb{C}})d\mu .%
\end{array}
\label{2a}
\end{equation}%
Since 
\begin{equation*}
\varphi =0\  \mathrm{and}\  \Delta _{b}\varphi =0\  \mathrm{on}\  \Sigma ,
\end{equation*}%
(\ref{1a}) and (\ref{2a}) imply 
\begin{equation*}
\begin{array}{l}
\frac{n+1}{n}\int_{M}(\Delta _{b}\varphi )^{2}d\mu \geq \frac{n+2}{4n}\mu
_{_{D}}^{1}\int_{M}\varphi ^{2}d\mu +\int_{M}[2Ric-(n+1)Tor]((\nabla
_{b}\varphi )_{\mathbb{C}},(\nabla _{b}\varphi )_{\mathbb{C}})d\mu .%
\end{array}%
\end{equation*}%
Moreover, by using 
\begin{equation*}
\begin{array}{c}
\lbrack 2Ric-(n+1)Tor]((\nabla _{b}\varphi )_{\mathbb{C}},(\nabla
_{b}\varphi )_{\mathbb{C}})\geq k|\nabla _{b}\varphi |^{2}%
\end{array}%
\end{equation*}%
and 
\begin{equation*}
\begin{array}{c}
\int_{M}|\nabla _{b}\varphi |^{2}d\mu =\lambda _{1}\int_{M}\varphi ^{2}d\mu ,%
\end{array}%
\end{equation*}%
we obtain%
\begin{equation*}
\begin{array}{l}
\frac{n+1}{n}\lambda _{1}^{2}\int_{M}\varphi ^{2}d\mu \geq (k\lambda _{1}+%
\frac{n+2}{4n}\mu _{_{D}}^{1})\int_{M}\varphi ^{2}d\mu .%
\end{array}%
\end{equation*}%
Hence 
\begin{equation*}
\begin{array}{c}
\frac{n+1}{n}\lambda _{1}^{2}-k\lambda _{1}-\frac{n+2}{4n}\mu
_{_{D}}^{1}\geq 0%
\end{array}%
\end{equation*}%
and thus%
\begin{equation*}
\begin{array}{c}
\lambda _{1}\geq \frac{nk+\sqrt{n^{2}k^{2}+(n+1)(n+2)\mu _{_{D}}^{1}}}{2(n+1)%
}.%
\end{array}%
\end{equation*}

(i) In case for $n=1,$ we have%
\begin{equation*}
\begin{array}{c}
\lambda _{1}\geq \frac{k+\sqrt{k^{2}+6\mu _{_{D}}^{1}}}{4},%
\end{array}%
\end{equation*}%
for $\mu _{_{D}}^{1}\geq -\frac{k^{2}}{6}$. In addition if $P_{0}$ is
nonnegative, we have 
\begin{equation*}
\begin{array}{c}
\lambda _{1}\geq \frac{k}{2}.%
\end{array}%
\end{equation*}

(i) In case for $n\geq 2,$ it follows from (\ref{2bb}) and (\ref{2a}) that 
\begin{equation*}
\frac{n+1}{n}\lambda _{1}^{2}-k\lambda _{1}\geq 0
\end{equation*}%
and then 
\begin{equation*}
\begin{array}{c}
\lambda _{1}\geq \frac{nk}{(n+1)}.%
\end{array}%
\end{equation*}
\end{proof}

\section{The First Eigenvalue Estimate of Embedded $P$-minimal hypersurfaces}

In this section, we study a CR analogue of Yau conjecture \cite{Y} on the
first eigenvalue estimate of embedded $p$-minimal hypersurfaces.

\textbf{The Proof of Theorem}\textup{\textbf{\  \ref{TC}:}}

\begin{proof}
Since $M$ has vanishing torsion and positive pseudohermitian Ricci
curvature, it follows from \cite{cc1} that $M$ has positive Ricci curvature
with respect to the Webster metric. Hence its first homology group $H^{1}(M,%
\mathbb{R})$ is trivial. By an exact sequence argument, we conclude that $%
\Sigma $ divides $M$ into two connected components $M_{1}$ and $M_{2}$ with $%
\partial M_{1}=\Sigma =\partial M_{2}$. Let us denote $D$ to be one of two
components to be chosen later. If $u$ is the first nonconstant eigenfunction
on $\Sigma $, satisfying 
\begin{equation*}
L_{\alpha }u=-\lambda _{1}u.
\end{equation*}%
We first let $\varphi $ be the solution of 
\begin{equation*}
\Delta _{b}\varphi =0\text{\  \textrm{on}\ }D
\end{equation*}%
with the boundary condition 
\begin{equation*}
\varphi =u\text{\  \textrm{on}\ }\Sigma .
\end{equation*}

If $D$ is a compact pseudohermitian $(2n+1)$-manifold with the smooth
boundary $\Sigma ,$ then $P_{0}$ is self-adjoint on the space of all smooth
functions with $\Delta _{b}\varphi =0$ and $(\Delta _{b}\varphi )_{e_{2n}}=0$
on $\Sigma $. In fact, it suffices to check that 
\begin{equation}
\begin{array}{ccl}
\int_{D}g\Delta _{b}^{2}fd\mu & = & -\int_{D}\left \langle \nabla
_{b}g,\nabla _{b}\Delta _{b}f\right \rangle d\mu +C_{n}\int_{\Sigma
}g(\Delta _{b}f)_{e_{2n}}d\Sigma _{p} \\ 
& = & \int_{D}\Delta _{b}f\Delta _{b}gd\mu -C_{n}\int_{\Sigma
}g_{e_{2n}}\Delta _{b}fd\Sigma _{p}+C_{n}\int_{\Sigma }g(\Delta
_{b}f)_{e_{2n}}d\Sigma _{p} \\ 
& = & \int_{D}\Delta _{b}f\Delta _{b}gd\mu =\int_{D}f\Delta _{b}^{2}gd\mu%
\end{array}
\label{A1}
\end{equation}%
and for $\alpha =0$%
\begin{equation}
\begin{array}{ccl}
\int_{D}gf_{00}d\mu & = & -\int_{D}g_{0}f_{0}d\mu +2C_{n}\int_{\Sigma
}\alpha gf_{0}d\Sigma _{p} \\ 
& = & \int_{D}fg_{00}d\mu -2C_{n}\int_{\Sigma }\alpha fg_{0}d\Sigma
_{p}+2C_{n}\int_{\Sigma }\alpha gf_{0}d\Sigma _{p} \\ 
& = & \int_{D}fg_{00}d\mu .%
\end{array}
\label{A2}
\end{equation}%
It follows that if the torsion is vanishing 
\begin{equation}
\begin{array}{c}
\int_{D}\varphi P_{0}\varphi d\mu \geq 0.%
\end{array}
\label{aaa}
\end{equation}%
By applying the CR Reilly formula (\ref{Reilly'sformula}), we have 
\begin{equation}
\begin{array}{lll}
0 & \geq & k\int_{D}|\nabla _{b}\varphi |^{2}d\mu -\frac{1}{4}%
C_{n}\int_{\Sigma }\varphi _{0}\varphi _{e_{n}}d\Sigma _{p}-\frac{n+2}{2n}%
iC_{n}\int_{\Sigma }\varphi \left( P_{n}\varphi -P_{\overline{n}}\varphi
\right) d\Sigma _{p} \\ 
&  & +\frac{i}{2}C_{n}\int_{\Sigma }(\varphi ^{\overline{\beta }}B_{n%
\overline{\beta }}\varphi -\varphi ^{\beta }B_{\overline{n}\beta }\varphi
)d\Sigma _{p}+\frac{3}{4n}C_{n}\int_{\Sigma }\varphi _{e_{2n}}\Delta
_{b}\varphi d\Sigma _{p} \\ 
&  & +C_{n}\int_{\Sigma }\varphi _{e_{2n}}\Delta _{b}^{t}\varphi d\Sigma
_{p}+\frac{1}{2}C_{n}\int_{\Sigma }\alpha \varphi _{e_{n}}\varphi
_{e_{2n}}d\Sigma _{p} \\ 
&  & +\frac{1}{4}C_{n}\int_{\Sigma }\left \langle \nabla
_{e_{i}}e_{2n},e_{j}\right \rangle \varphi _{e_{i}}\varphi _{e_{j}}d\Sigma
_{p}.%
\end{array}
\label{00}
\end{equation}%
Now we are going to estimate all terms in RHS of (\ref{00}):

(i) By the CR divergence theorem and $\Delta _{b}\varphi ^{2}=2\varphi
\Delta _{b}\varphi +2|\nabla _{b}\varphi |^{2}=2|\nabla _{b}\varphi |^{2},$
we have 
\begin{equation}
\begin{array}{cl}
& C_{n}\int_{\Sigma }\varphi _{e_{2n}}\Delta _{b}^{t}\varphi d\Sigma _{p} \\ 
= & -C_{n}\int_{\Sigma }\alpha \varphi _{e_{n}}\varphi _{e_{2n}}d\Sigma
_{p}-\lambda _{1}C_{n}\int_{\Sigma }\varphi \varphi _{e_{2n}}d\Sigma _{p} \\ 
= & -C_{n}\int_{\Sigma }\alpha \varphi _{e_{n}}\varphi _{e_{2n}}d\Sigma _{p}-%
\frac{1}{2}\lambda _{1}C_{n}\int_{\Sigma }(\varphi ^{2})_{e_{2n}}d\Sigma _{p}
\\ 
= & -C_{n}\int_{\Sigma }\alpha \varphi _{e_{n}}\varphi _{e_{2n}}d\Sigma
_{p}-\lambda _{1}\int_{D}(\Delta _{b}\varphi ^{2})d\mu \\ 
= & -C_{n}\int_{\Sigma }\alpha \varphi _{e_{n}}\varphi _{e_{2n}}d\Sigma
_{p}-2\lambda _{1}\int_{D}|\nabla _{b}\varphi |^{2}d\mu .%
\end{array}
\label{01}
\end{equation}

(ii) By the CR Green theorem 
\begin{equation}
\begin{array}{c}
\frac{3}{4n}C_{n}\int_{\Sigma }\varphi _{e_{2n}}\Delta _{b}\varphi d\Sigma
_{p}=\frac{3}{4n}\int_{D}(\Delta _{b}\varphi )^{2}d\mu +\frac{3}{4n}%
\int_{D}\left \langle \nabla _{b}\Delta _{b}\varphi ,\nabla _{b}\varphi
\right \rangle d\mu =0.%
\end{array}
\label{02}
\end{equation}

(iii) The same computation as (\ref{2ab}) for $\alpha =0$ and from (\ref{A}) 
\begin{equation}
\begin{array}{ll}
& \frac{i}{2}C_{n}\int_{\Sigma }(\varphi ^{\overline{\beta }}B_{n\overline{%
\beta }}\varphi -\varphi ^{\beta }B_{\overline{n}\beta }\varphi )d\Sigma _{p}
\\ 
= & \frac{1}{4}C_{n}\int_{\Sigma }\varphi _{e_{2n}}(\varphi _{e_{n}e_{n}}+%
\frac{n-1}{n}\Delta _{b}\varphi -2\Delta _{b}^{t}\varphi -H_{p.h}\varphi
_{e_{2n}})d\Sigma _{p}+\frac{n-1}{4}C_{n}\int_{\Sigma }\varphi _{0}\varphi
_{e_{n}}d\Sigma _{p} \\ 
= & \frac{1}{4}C_{n}\int_{\Sigma }\varphi _{e_{2n}}(\frac{n-1}{n}\Delta
_{b}\varphi -\sum_{j\neq n,2n}\varphi _{e_{j}e_{j}})d\Sigma _{p}+\frac{n-1}{4%
}C_{n}\int_{\Sigma }\varphi _{0}\varphi _{e_{n}}d\Sigma _{p}.%
\end{array}
\label{03}
\end{equation}

(iv) By straightforward computation, since $A_{\beta \gamma }=0$ 
\begin{equation*}
\begin{array}{c}
i\left( P_{n}\varphi -P_{\overline{n}}\varphi \right) =i\left( \varphi _{%
\overline{\beta }}{}^{\overline{\beta }}{}_{n}-\varphi _{\beta }{}^{\beta
}{}_{\overline{n}}\right) =\frac{1}{2}[n\varphi _{0e_{n}}+(\Delta
_{b}\varphi )_{e_{2n}}].%
\end{array}%
\end{equation*}%
From (\ref{2014b}), (\ref{03}) and $\int_{\Sigma }\varphi (\Delta
_{b}\varphi )_{e_{2n}}d\Sigma _{p}=0$ that 
\begin{equation}
\begin{array}{ccl}
-\frac{n+2}{2n}iC_{n}\int_{\Sigma }\varphi (P_{n}\varphi -P_{\overline{n}%
}\varphi )d\Sigma _{p} & = & -\frac{n+2}{4n}C_{n}\int_{\Sigma }\varphi
\lbrack n\varphi _{0e_{n}}+(\Delta _{b}\varphi )_{e_{2n}}]d\Sigma _{p} \\ 
& = & \frac{n+2}{4}C_{n}\int_{\Sigma }\varphi _{0}\varphi _{e_{n}}d\Sigma
_{p}+\frac{n+2}{2}C_{n}\int_{\Sigma }\alpha \varphi _{0}\varphi d\Sigma _{p}.%
\end{array}
\label{04}
\end{equation}%
By combining (\ref{00}), (\ref{01}), (\ref{02}), (\ref{03}) and (\ref{04})
for $\alpha =0$%
\begin{equation*}
\begin{array}{lll}
0 & \geq & (k-2\lambda _{1})\int_{D}|\nabla _{b}\varphi |^{2}d\mu +\frac{n}{2%
}C_{n}\int_{\Sigma }\varphi _{0}\varphi _{e_{n}}d\Sigma _{p} \\ 
&  & -\frac{1}{4}C_{n}\int_{\Sigma }\sum_{j\neq n,2n}\varphi
_{e_{j}e_{j}}\varphi _{e_{2n}}d\Sigma _{p}+\frac{1}{4}C_{n}\int_{\Sigma
}\left \langle \nabla _{e_{i}}e_{2n},e_{j}\right \rangle \varphi
_{e_{i}}\varphi _{e_{j}}d\Sigma _{p}.%
\end{array}%
\end{equation*}%
Moreover if $\alpha =0$, then the $p$-area element $d\Sigma _{p}$ is the
area form $d\Sigma $ on $\Sigma $ and

\begin{equation}
\begin{array}{lll}
0 & \geq & (k-2\lambda _{1})\int_{D}|\nabla _{b}\varphi |^{2}d\mu +\frac{n}{2%
}C_{n}\int_{\Sigma }\varphi _{0}\varphi _{e_{n}}d\Sigma \\ 
&  & -\frac{1}{4}C_{n}\int_{\Sigma }\sum_{j\neq n,2n}\varphi
_{e_{j}e_{j}}\varphi _{e_{2n}}d\Sigma +\frac{1}{4}C_{n}\int_{\Sigma
}\left \langle \nabla _{e_{i}}e_{2n},e_{j}\right \rangle \varphi
_{e_{i}}\varphi _{e_{j}}d\Sigma .%
\end{array}
\label{05}
\end{equation}

Next we observe that $T$ is always tangent to $\Sigma $ due to $\alpha =0$.
Then $\int_{\Sigma }\varphi _{0}\varphi _{e_{n}}d\Sigma $ is independent of
the extended function $\varphi $. If we choose a different component of $%
M\backslash \Sigma $ to perform this computation, $u_{e_{n}}u_{0},$ $%
\sum_{j\neq n,2n}u_{e_{j}e_{j}}u_{e_{2n}}$ and $\left \langle \nabla
_{e_{i}}e_{2n},e_{j}\right \rangle u_{e_{i}}u_{e_{j}}$ will differ by a sign,
hence we may choose a component, say $M_{1}$, so that 
\begin{equation}
\begin{array}{c}
2n\int_{\Sigma }\varphi _{0}\varphi _{e_{n}}d\Sigma -\int_{\Sigma
}\sum_{j\neq n,2n}\varphi _{e_{j}e_{j}}\varphi _{e_{2n}}d\Sigma
+\int_{\Sigma }\left \langle \nabla _{e_{i}}e_{2n},e_{j}\right \rangle \varphi
_{e_{i}}\varphi _{e_{j}}d\Sigma \geq 0.%
\end{array}
\label{06}
\end{equation}%
By combining (\ref{05}) and (\ref{06}) that we have 
\begin{equation*}
\begin{array}{c}
0\geq (k-2\lambda _{1})\int_{D}|\nabla _{b}\varphi |^{2}d\mu%
\end{array}%
\end{equation*}%
with $D=M_{1}.$ This implies 
\begin{equation*}
0\geq k-2\lambda _{1}
\end{equation*}%
and thus%
\begin{equation*}
\begin{array}{c}
\lambda _{1}\geq \frac{k}{2}%
\end{array}%
\end{equation*}%
because $\varphi $ has boundary value $u$ which is nonconstant.

Now if the equality holds for $n=1.$, then 
\begin{equation*}
W=k.
\end{equation*}%
Since $A_{11}=0$, 
\begin{equation*}
Q_{11}=0
\end{equation*}%
and then $(M,J,\theta )$ is a closed spherical pseudohermitian $3$-manifold.
On the other hand, it follows from (\cite{chmy}) that any embedded $p$%
-minimal surface in a closed spherical pseudohermitian $3$-manifold must
have genus less than two. In additional, if $M$ is simply connected, then $%
(M,J,\theta )$ is the standard pseudohermitian $3$-sphere. This completes
the proof.
\end{proof}

\end{document}